\DeclareMathAlphabet{\mathscr}{OT1}{pzc}{m}{it} 
\numberwithin{equation}{section}
\setlist{labelindent=\parindent,leftmargin=*}
\newtheorem{theorem}{Theorem}[section]
\newtheorem{notation}[theorem]{Notation}
\newtheorem{lemma}[theorem]{Lemma}
\newtheorem{proposition}[theorem]{Proposition}
\newtheorem{corollary}[theorem]{Corollary}
\newtheorem{definition}[theorem]{Definition}
\newtheorem{remark}[theorem]{Remark}
\newsavebox\myboxA
\newsavebox\myboxB
\newlength\mylenA
\newcommand*\xoverline[2][0.75]{%
    \sbox{\myboxA}{$\m@th#2$}%
    \setbox\myboxB\null
    \ht\myboxB=\ht\myboxA%
    \dp\myboxB=\dp\myboxA%
    \wd\myboxB=#1\wd\myboxA
    \sbox\myboxB{$\m@th\overline{\copy\myboxB}$}
    \setlength\mylenA{\the\wd\myboxA}
    \addtolength\mylenA{-\the\wd\myboxB}%
    \ifdim\wd\myboxB<\wd\myboxA%
       \rlap{\hskip 0.5\mylenA\usebox\myboxB}{\usebox\myboxA}%
    \else
        \hskip -0.5\mylenA\rlap{\usebox\myboxA}{\hskip 0.5\mylenA\usebox\myboxB}%
    \fi}
\title{A note on  time-dependent additive functionals}
\author{
Adrien BARRASSO \thanks{ENSTA ParisTech, Unit\'e de Math\'ematiques
 appliqu\'ees, 828, boulevard des Mar\'echaux, F-91120 Palaiseau, France 
 and Ecole Polytechnique,  F-91128 Palaiseau, France.
E-mail: {\sf adrien.barrasso@ensta-paristech.fr}. \\
 This author was supported by a PhD fellowship (AMX) of the Ecole Polytechnique.}
\qquad\quad
Francesco RUSSO\thanks{ENSTA ParisTech, Unit\'e de Math\'ematiques appliqu\'ees, 828, boulevard des Mar\'echaux, F-91120 Palaiseau, France. E-mail: {\sf
 francesco.russo@ensta-paristech.fr.} \\
The financial support of this author was partially provided
 by the DFG through the CRC ''Taming uncertainty and profiting from 
randomness and low regularity in analysis, stochastics and their application''.}
}
\date{August 2nd 2017}
\begin{document}
\maketitle

{\bf Abstract.} This note develops shortly the theory of
time-inhomogeneous additive functionals and 
is a useful support for the analysis of time-dependent
Markov processes and related topics. It is 
a significant tool for the analysis of BSDEs in law.
In particular we extend to a non-homogeneous setup some results
 concerning the quadratic variation and the angular bracket of 
 Martingale Additive Functionals (in short MAF) associated to a homogeneous Markov processes.

\bigskip
{\bf MSC 2010}. 
60J55; 60J35; 60G07; 60G44.

\bigskip
{\bf KEY WORDS AND PHRASES.} Additive functionals, Markov processes, 
covariation.

\section{Introduction}

The notion of Additive  Functional of a general  Markov process is due to 
E.B Dynkin and has been studied since the early '60s by the Russian, 
French and American schools of probability, see for example
 \cite{dynkin1959foundations}, \cite{MeyerPhdAF}, \cite{BlumGetAF}. A mature version of the homogeneous theory may be found for example in \cite{dellmeyerD},
 Chapter XV. In that context, given a  probability $\mu$ on some state space $E$,  
  $\mathbbm{P}^{\mu}$ denotes the law of a time-homogeneous Markov process 
with initial law $\mu$.
 An {\bf Additive Functional} is a  process $(A_t)_{t\geq 0}$
defined on a canonical space, adapted to the
canonical filtration  such that for any
 $s\leq t$ and $\mu$,  $A_{s+t}=A_s+A_t\circ \theta_s$ $\mathbbm{P}^{\mu}$-a.s., 
where $\theta$ is the usual shift operator on the canonical space.

 If moreover $A$ is under any law  $\mathbbm{P}^{\mu}$ a martingale, then it 
is called a Martingale Additive Functional (MAF). The quadratic variation 
and angular  bracket of a MAF were shown to be AFs in \cite{dellmeyerD}.
We  extend this type of results to a more general definition of an AF which
 is closer to the original notion of Additive Functional associated to a 
stochastic system introduced by E.B. Dynkin, see \cite{dynkin1975additive} for instance.
\\
\\
Our setup will be the following. We consider a canonical Markov class 
$(\mathbbm{P}^{s,x})_{(s,x)\in[0,T]\times E}$ with time index $[0,T]$ and state
 space $E$ being a Polish space. For any $(s,x)\in[0,T]\times E$, 
$\mathbbm{P}^{s,x}$ corresponds to the probability law 
(defined on some canonical filtered space $\left(\Omega,\mathcal{F},(\mathcal{F}_t)_{t\in[0,T]}\right)$) of a Markov process starting from point $x$ at time $s$.
On $(\Omega,\mathcal{F})$, we define a \textbf{non-homogeneous 
Additive Functional} (shortened by AF) as a real-valued random-field 
$A:=(A^t_u)_{0\leq t\leq u\leq T}$ verifying the two following conditions.
\begin{enumerate}
	\item For any $0\leq t\leq u\leq T$, $A^t_u$ is $\mathcal{F}_{t,u}$-measurable;
	\item for any $(s,x)\in[0,T]\times E$, there exists a real cadlag $(\mathcal{F}^{s,x}_t)_{t\in[0,T]}$-adapted process $A^{s,x}$ (taken equal to zero on $[0,s]$ by convention) such that for any $x\in E$ and $s\leq t\leq u$, $A^t_u = A^{s,x}_u-A^{s,x}_t \,\text{  }\, \mathbbm{P}^{s,x}$ a.s.
\end{enumerate}
Where $\mathcal{F}_{t,u}$ is the $\sigma$-field generated by the canonical process between time $t$ and $u$, and $\mathcal{F}^{s,x}_t$ is obtained by adding the $\mathbbm{P}^{s,x}$ negligible sets to $\mathcal{F}_t$.
$A^{s,x}$ will be called the \textbf{cadlag version of $A$ under} $\mathbbm{P}^{s,x}$. If for any $(s,x)$, $A^{s,x}$ is a $(\mathbbm{P}^{s,x},(\mathcal{F}_t)_{t\in[0,T]})$-square integrable martingale then $A$ will be called a square integrable Martingale Additive Functional (in short, square integrable MAF).
\\
\\
The main contributions  of the paper are essentially the following. In Section \ref{A1}, we recall the definition and prove some basic results concerning canonical Markov classes. In Section \ref{A2}, we start by defining an AF in Definition \ref{DefAF}. In Proposition \ref{VarQuadAF}, we show that if  $(M^t_u)_{0\leq t\leq u\leq T}$ is a square integrable MAF, then there exists an AF $([M]^t_u)_{0\leq t\leq u\leq T}$ which  for any $(s,x)\in[0,T]\times E$, has  $[M^{s,x}]$
as cadlag version under $\mathbbm{P}^{s,x}$. Corollary \ref{AFbracket} states that given two square integrable MAFs $(M^t_u)_{0\leq t\leq u\leq T}$, $(N^t_u)_{0\leq t\leq u\leq T}$, there exists an AF, denoted by
 $(\langle M,N\rangle^t_u)_{0\leq t\leq u\leq T}$, which has  $\langle M^{s,x},N^{s,x}\rangle$ as cadlag version under $\mathbbm{P}^{s,x}$. Finally, we prove in Proposition \ref{BracketMAFnew} that if $M$ or $N$
is such that for
 $\mathbbm{P}^{s,x}$, its cadlag version under $\mathbbm{P}^{s,x}$,
 its angular bracket is absolutely continuous with respect to some continuous non-decreasing function $V$, then there exists a Borel function $v$ such that for any $(s,x)$, $\langle M^{s,x},N^{s,x}\rangle=\int_s^{\cdot\vee s}v(r,X_r)dV_r$. \\
The present note constitutes a support for the
authors, in the analysis of deterministic
problems related to Markovian type backward stochastic differential
equations where the forward process is given in law, see e.g.
\cite{paper1preprint}. Indeed, when the forward process of the BSDE does 
not define a stochastic flow (typically if it is not the strong solution of
 an SDE but only a weak solution), we cannot exploit the mentioned 
flow property to show that the solution of the BSDE is a function of 
the forward process, as it is usually done, see Remark 5.35 (ii) in \cite{PardouxRascanu} for instance.

\section{Preliminaries}

\label{SPrelim}

In the whole paper we will use the following notions, notations and vocabulary.
\\
\\
A topological space $E$ will always be considered as a measurable space with its Borel $\sigma$-field which shall be denoted $\mathcal{B}(E)$ and if $S$
is another topological space equipped
with its Borel $\sigma$-field, $\mathcal{B}(E,S)$  will denote the set of Borel functions from $E$ to $S$.
\\

Let $(\Omega,\mathcal{F})$, $(E,\mathcal{E})$ be two measurable spaces. A measurable mapping from $(\Omega,\mathcal{F})$ to $(E,\mathcal{E})$ shall often be called a \textbf{random variable} (with values in $E$), or in short r.v. If $\mathbbm{T}$ is some set, an indexed set of r.v. with values in $E$, $(X_t)_{t\in \mathbbm{T}}$ will be called a \textbf{random field} (indexed by $\mathbbm{T}$ with values in $E$). In particular, if $\mathbbm{T}$ is an interval included in $\mathbbm{R}_+$, $(X_t)_{t\in \mathbbm{T}}$ will be called a \textbf{stochastic process} (indexed by $\mathbbm{T}$ with values in $E$). Given a stochastic process, if the mapping 
\begin{equation*}
\begin{array}{rcl}
(t,\omega)&\longmapsto&X_t(\omega)\\
(\mathbbm{T}\times\Omega,\mathcal{B}(\mathbbm{T})\otimes\mathcal{F})&
\longrightarrow&(E,\mathcal{E})
\end{array}
\end{equation*}
is measurable, then the process $(X_t)_{t\in \mathbbm{T}}$ will be called a \textbf{measurable process} (indexed by $\mathbbm{T}$ with values in $E$).
\\
\\
On a fixed probability space $\left(\Omega,\mathcal{F},\mathbbm{P}\right)$, for any $p \ge 1$, $L^p$ will denote the set of random variables with finite $p$-th moment.
Two random fields (or stochastic processes) $(X_t)_{t\in \mathbbm{T}}$, $(Y_t)_{t\in \mathbbm{T}}$ indexed by the same set and with values in the same space will be said to be \textbf{modifications (or versions)
 of each other} if for every $t\in\mathbbm{T}$, $\mathbbm{P}(X_t=Y_t)=1$.
\\
\\
A probability space equipped with a right-continuous filtration
 $\left(\Omega,\mathcal{F},(\mathcal{F}_t)_{t\in\mathbbm{T}},\mathbbm{P}\right)$  will be called called a \textbf{stochastic basis} and will be said to \textbf{fulfill the usual conditions} if the probability space is complete and if $\mathcal{F}_0$ contains all the $\mathbbm{P}$-negligible sets.
\\
\\
Concerning spaces of stochastic processes, in a fixed stochastic basis $\left(\Omega,\mathcal{F},(\mathcal{F}_t)_{t\in\mathbbm{T}},\mathbbm{P}\right)$, we will use the following notations and vocabulary.
$\mathcal{M}$ will be the space of cadlag martingales.  
\\
For any $p\in[1,\infty]$  $\mathcal{H}^p$ will denote
the subset of $\mathcal{M}$ of elements $M$ such that $\underset{t\in \mathbbm{T}}{\text{sup }}|M_t|\in L^p$ and in this set we identify indistinguishable elements. It is a Banach space for  the norm
$\| M\|_{\mathcal{H}^p}=\mathbbm{E}[|\underset{t\in \mathbbm{T}}{\text{sup }}M_t|^p]^{\frac{1}{p}}$, and
$\mathcal{H}^p_0$ will denote the Banach subspace of $\mathcal{H}^p$
containing the elements starting at zero.
\\
If $\mathbbm{T}=[0,T]$ for some $T\in\mathbbm{R}_+^*$, a stopping time will be defined as a random variable with values in 
$[0,T]\cup\{+\infty\}$ such that for any $t\in[0,T]$, $\{\tau\leq t\}\in \mathcal{F}_t$. We define a \textbf{localizing sequence of stopping times} as an increasing sequence of stopping times $(\tau_n)_{n\geq 0}$ such that there exists $N\in\mathbbm{N}$ for which $\tau_N=+\infty$. Let $Y$ be a process and $\tau$ a stopping time, we denote  $Y^{\tau}$ the process $t\mapsto Y_{t\wedge\tau}$ which we call \textbf{stopped process}.  If $\mathcal{C}$ is a set of processes, we define its \textbf{localized class} $\mathcal{C}_{loc}$ as the set of processes $Y$ such that there exist a localizing sequence $(\tau_n)_{n\geq 0}$ such that for every $n$, the stopped process $Y^{\tau_n}$ belongs to $\mathcal{C}$.
\\
For any $M,N\in  \mathcal{M}$, we denote $[M]$ (resp. $[M,N]$) the \textbf{quadratic variation} of $M$ (resp. \textbf{quadratic covariation} of $M,N$). If $M,N\in\mathcal{H}_{loc}^2$, $\langle M,N\rangle$ (or simply $\langle M\rangle$ if $M=N$) will denote their (predictable) \textbf{angular bracket}.
$\mathcal{H}_0^2$ will be equipped with scalar product defined by $(M,N)_{\mathcal{H}^2}:=\mathbbm{E}[M_TN_T] 
=\mathbbm{E}[\langle M, N\rangle_T] $ which makes it a Hilbert space.
Two elements $M,N$ of $\mathcal{H}^2_{0,loc}$ will be said to be \textbf{strongly orthogonal} if  $\langle M,N\rangle=0$.
\\
If $A$ is an adapted process with bounded variation then $Var(A)$ (resp. 
$Pos(A)$,  $Neg(A)$) will denote its total variation (resp. positive variation,
  negative variation), see Proposition 3.1, chap. 1 in \cite{jacod}.
In particular for almost all $\omega \in \Omega$, 
$t\mapsto Var_t(A(\omega))$ is the total variation function of
the function $t\mapsto A_t(\omega)$.

\section{Markov classes}\label{A1}

We recall here some basic definitions and results concerning Markov processes. For a complete study of homogeneous Markov processes, one may consult \cite{dellmeyerD}, concerning non-homogeneous Markov classes, our reference was Chapter VI of \cite{dynkin1982markov}. 

\subsection{Definition and basic results}

The first definition refers to the  canonical space that one can find in \cite{jacod79}, see paragraph 12.63.
\begin{notation}\label{canonicalspace}
In the whole section  $E$ will be a fixed  Polish  space (a separable completely metrizable topological space), and $\mathcal{B}(E)$  its Borel $\sigma$-field. $E$ will be called the \textbf{state space}. 
\\
\\ 
We consider $T\in\mathbbm{R}^*_+$. We denote $\Omega:=\mathbbm{D}(E)$ the Skorokhod space of functions from $[0,T]$ to $E$  right-continuous  with left limits and continuous at time $T$ (e.g. cadlag). For any $t\in[0,T]$ we denote the coordinate mapping $X_t:\omega\mapsto\omega(t)$, and we introduce on $\Omega$ the $\sigma$-field  $\mathcal{F}:=\sigma(X_r|r\in[0,T])$.

\begin{remark}
	All the results of the present paper remain valid if $\Omega$ is the space of continuous functions from $[0,T]$ to $E$, and to a time index equal to $\mathbbm{R}_+$.
\end{remark}

On the measurable space $(\Omega,\mathcal{F})$, we introduce the \textbf{canonical process}
\begin{equation*}
X:
\begin{array}{rcl}
(t,\omega)&\longmapsto& \omega(t)\\ \relax
([0,T]\times \Omega,\mathcal{B}([0,T])\otimes\mathcal{F}) &\longrightarrow & (E,\mathcal{B}(E)),
\end{array}
\end{equation*}
and the right-continuous filtration $(\mathcal{F}_t)_{t\in[0,T]}$ where $\mathcal{F}_t:=\underset{s\in]t,T]}{\bigcap}\sigma(X_r|r\leq s)$ if $t<T$, and $\mathcal{F}_T:= \sigma(X_r|r\in[0,T])=\mathcal{F}$.
\\
\\
$\left(\Omega,\mathcal{F},(\mathcal{F}_t)_{t\in[0,T]}\right)$ will be called the \textbf{canonical space} (associated to $T$ and $E$).
\\
\\
For any $t \in [0,T]$ we denote $\mathcal{F}_{t,T}:=\sigma(X_r|r\geq t)$, and
for any $0\leq t\leq u<T$ we will denote
$\mathcal{F}_{t,u}:= \underset{n\geq 0}{\bigcap}\sigma(X_r|r\in[t,u+\frac{1}{n}])$.
\end{notation}
We recall that since $E$ is Polish, then $\mathbbm{D}(E)$ can be equipped with a Skorokhod distance which makes it a Polish metric space (see Theorem 5.6 in Chapter 3 of \cite{EthierKurz}), and for which the Borel $\sigma$-field is $\mathcal{F}$ (see Proposition 7.1 in Chapter 3 of \cite{EthierKurz}). This in particular implies that $\mathcal{F}$ is separable, as the Borel $\sigma$-field of a separable metric space.

\begin{remark}\label{RemFiltr}
The above $\sigma$-fields fulfill the properties below.
\begin{enumerate}
\item For any $0\leq t\leq u < T$, $\mathcal{F}_{t,u}=\mathcal{F}_u\cap \mathcal{F}_{t,T}$; 
\item for any $t\geq 0$, $\mathcal{F}_t \vee \mathcal{F}_{t,T} = \mathcal{F}$;
\item for any $(s,x)\in[0,T]\times E$, the two first items remain true when considering the $\mathbbm{P}^{s,x}$-closures of all the $\sigma$-fields; 
\item for any $t\geq 0$, $\Pi:=\{F=F_t\cap F^t_{T}| (F_t,F^t_{T}) \in \mathcal{F}_t\times  \mathcal{F}_{t,T}\}$ is a $\pi$-system generating $\mathcal{F}$, i.e.
it is stable with respect to the intersection.
\end{enumerate}
\end{remark}


\begin{definition}\label{Defp}
	The function 
	\begin{equation*}
	P:\begin{array}{rcl}
	(s,t,x,A) &\longmapsto& P_{s,t}(x,A)   \\ \relax
	[0,T]^2\times E\times\mathcal{B}(E) &\longrightarrow& [0,1], 
	\end{array}
	\end{equation*}
	will be called \textbf{transition kernel} if, for any $s,t$ in $[0,T]$, $x\in E$,  $A\in \mathcal{B}(E)$, it verifies the following.
	
	\begin{enumerate}
		\item $P_{s,t}(\cdot,A)$ is Borel,
		\item $P_{s,t}(x,\cdot)$ is a probability measure on $(E,\mathcal{B}(E))$,
		\item if $t\leq s$ then $P_{s,t}(x,A)=\mathds{1}_A(x)$,
		\item if $s<t$, for any $u>t$, $\int_{E} P_{s,t}(x,dy)P_{t,u}(y,A) = P_{s,u}(x,A)$.
	\end{enumerate}
\end{definition}
The latter statement is the well-known \textbf{Chapman-Kolmogorov equation}.

\begin{definition}\label{DefFoncTrans}
	A transition kernel $P$ for which  the first item is reinforced 
	supposing that $(s,x)\longmapsto P_{s,t}(x,A)$ is Borel for any $t,A$,
	will be said to be \textbf{measurable in time}.
	
\end{definition}

\begin{remark} \label{RDefFoncTrans}
 Let $P$ be a transition kernel which is measurable in time.
By approximation by 
simple functions, one can easily show that,
  for any Borel function $\phi$ from $E$ to $\mathbbm{R}$ then
$(s,x)\mapsto \int \phi(y)P_{s,t}(x,dy)$ is Borel, provided
previous integral  makes sense.
In this paper we will only consider transition kernels which are measurable in time.
\end{remark}

\begin{definition}\label{defMarkov}
A \textbf{canonical Markov class} associated to a transition kernel $P$ is a set of probability measures $(\mathbbm{P}^{s,x})_{(s,x)\in[0,T]\times E}$ defined on the measurable space 
$(\Omega,\mathcal{F})$ and verifying for any $t \in [0,T]$ and $A\in\mathcal{B}(E)$
\begin{equation}\label{Markov1}
\mathbbm{P}^{s,x}(X_t\in A)=P_{s,t}(x,A),
\end{equation}
and for any $s\leq t\leq u$
\begin{equation}\label{Markov2}
\mathbbm{P}^{s,x}(X_u\in A|\mathcal{F}_t)=P_{t,u}(X_t,A)\quad \mathbbm{P}^{s,x}\text{ a.s.}
\end{equation}
\end{definition}
\begin{remark}\label{Rfuturefiltration}
	Formula 1.7 in Chapter 6 of \cite{dynkin1982markov} states
	that for $(s,x)\in[0,T]\times E$, $t\geq s$ and $F\in \mathcal{F}_{t,T}$ yields
	\begin{equation}\label{Markov3}
	\mathbbm{P}^{s,x}(F|\mathcal{F}_t) = \mathbbm{P}^{t,X_t}(F)=\mathbbm{P}^{s,x}(F|X_t)\,\text{  }\,  \mathbbm{P}^{s,x} \text{a.s.}
	\end{equation}
	Property  \eqref{Markov3}  will  be called 
	\textbf{Markov property}.
\end{remark}
For the rest of this section, we are given a canonical Markov class $(\mathbbm{P}^{s,x})_{(s,x)\in[0,T]\times E}$ which transition kernel is measurable in time.

Proposition A.10 in \cite{paper2} states the following.
\begin{proposition}\label{Borel}
For any event $F\in \mathcal{F}$,  
$(s,x)\longmapsto \mathbbm{P}^{s,x}(F)$ is Borel.
For any random variable $Z$, if the function $(s,x)\longmapsto \mathbbm{E}^{s,x}[Z]$ 
is well-defined (with possible values in $[-\infty, \infty]$),
then it is Borel. 
\end{proposition}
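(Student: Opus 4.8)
The plan is to prove the second, functional statement by a monotone-class argument and to deduce the first as the special case $Z=\mathds{1}_F$.

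I would first treat the building blocks $Z=\prod_{i=1}^n f_i(X_{t_i})$, where after reordering we may assume $0\le t_1\le\cdots\le t_n\le T$ and each $f_i:E\to\mathbbm{R}$ is bounded and continuous. Combining the Markov property \eqref{Markov2} with the Chapman--Kolmogorov equation of Definition \ref{Defp} (and the convention $P_{s,t}(x,A)=\mathds{1}_A(x)$ for $t\le s$), the expectation under $\mathbbm{P}^{s,x}$ is the iterated integral
\begin{equation*}
\mathbbm{E}^{s,x}[Z]=\int_E P_{s,t_1}(x,dx_1)f_1(x_1)\int_E P_{t_1,t_2}(x_1,dx_2)f_2(x_2)\cdots\int_E P_{t_{n-1},t_n}(x_{n-1},dx_n)f_n(x_n).
\end{equation*}
I would evaluate this from the inside out: each inner integral defines, by Remark \ref{RDefFoncTrans}, a bounded Borel function of the preceding spatial variable (the two time indices being fixed), and the outermost integration against $P_{s,t_1}(x,\cdot)$ is jointly Borel in $(s,x)$ exactly because the kernel is measurable in time. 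Verifying that joint $(s,x)$-measurability survives the whole chain of integrations is the technical core of the argument and the step I expect to demand the most care.

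Next, let $\mathcal{H}$ be the set of bounded $\mathcal{F}$-measurable $Z$ for which $(s,x)\mapsto\mathbbm{E}^{s,x}[Z]$ is Borel. It is a vector space containing the constants and is closed under bounded monotone limits, since monotone convergence makes the expectations converge pointwise and a pointwise limit of Borel functions is Borel. The products above form a class stable under pointwise multiplication (concatenate the time lists, multiplying the factors at coinciding times) that generates $\mathcal{F}$: on the Polish space $E$ the bounded continuous functions generate $\mathcal{B}(E)$, so that $\sigma(f(X_t):f\in C_b(E),\,t\in[0,T])=\sigma(X_t:t\in[0,T])=\mathcal{F}$, where $C_b(E)$ denotes the bounded continuous real functions on $E$. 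The functional monotone class theorem then yields that $\mathcal{H}$ contains every bounded $\mathcal{F}$-measurable random variable, and taking $Z=\mathds{1}_F$ proves the first assertion.

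Finally I would remove the boundedness. For $Z\ge 0$, apply the bounded case to $Z\wedge n$ and let $n\to\infty$; by monotone convergence $\mathbbm{E}^{s,x}[Z\wedge n]\uparrow\mathbbm{E}^{s,x}[Z]$ pointwise, so $(s,x)\mapsto\mathbbm{E}^{s,x}[Z]$ is Borel with values in $[0,\infty]$. For general $Z$, write $Z=Z^+-Z^-$; both maps $(s,x)\mapsto\mathbbm{E}^{s,x}[Z^{\pm}]$ are $[0,\infty]$-valued and Borel, and since the hypothesis that $\mathbbm{E}^{s,x}[Z]$ is well-defined excludes the indeterminate case $\infty-\infty$, the map $(s,x)\mapsto\mathbbm{E}^{s,x}[Z]$ is the composition of the Borel map $(s,x)\mapsto(\mathbbm{E}^{s,x}[Z^+],\mathbbm{E}^{s,x}[Z^-])$ with Borel subtraction on $[0,\infty]^2\setminus\{(\infty,\infty)\}$, hence Borel.
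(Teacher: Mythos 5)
The paper itself does not prove this proposition; it simply quotes it as Proposition A.10 of \cite{paper2}, so there is no in-paper argument to compare your attempt against. Your overall architecture --- cylinder functionals $\prod_i f_i(X_{t_i})$ as a multiplicative class, the functional monotone class theorem, then truncation and the decomposition $Z=Z^+-Z^-$ --- is the standard and correct route, and the closure properties of $\mathcal{H}$, the generation of $\mathcal{F}$ by $C_b(E)$-cylinders, and the passage to unbounded $Z$ are all handled correctly.

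There is, however, a concrete error in the one computation you display: the iterated-integral identity is false as soon as $s$ exceeds some of the times $t_i$. The convention $P_{s,t_1}(x,\cdot)=\delta_x$ for $t_1\le s$ only repairs the first factor; the next kernel in your chain is then $P_{t_1,t_2}(x,\cdot)$, which is \emph{not} the law of $X_{t_2}$ under $\mathbbm{P}^{s,x}$ when $t_1<s<t_2$ --- by \eqref{Markov1} that law is $P_{s,t_2}(x,\cdot)$, and the two differ in general (for Brownian motion they are Gaussians of variances $t_2-t_1$ and $t_2-s$). The correct statement is that under $\mathbbm{P}^{s,x}$ one has $X_t=x$ a.s.\ for every $t\le s$ (by \eqref{Markov1} and item 3 of Definition \ref{Defp}), so that $\mathbbm{E}^{s,x}\bigl[\prod_i f_i(X_{t_i})\bigr]=\prod_{t_i\le s}f_i(x)\cdot\mathbbm{E}^{s,x}\bigl[\prod_{t_i>s}f_i(X_{t_i})\bigr]$, and only the second factor is given by your chain, started with the kernel $P_{s,t_j}(x,\cdot)$ where $t_j$ is the first time exceeding $s$; equivalently, replace each $P_{t_{i-1},t_i}$ by $P_{s\vee t_{i-1},t_i}$. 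Borel measurability of the corrected expression in $(s,x)$ still holds: split $[0,T]\times E$ into the finitely many Borel pieces $\{(s,x)\,:\,t_{j-1}<s\le t_j\}$ (with $t_0:=0$ and $t_{n+1}:=T$), run your inside-out argument on each piece, and use that a function which is Borel on each member of a finite Borel partition is Borel. With that correction the rest of your proof goes through. A minor remark: in your formula only the outermost kernel depends on $(s,x)$, so the ``joint measurability surviving the chain'' that you flag as the technical core is actually immediate; the genuine point requiring care is the one above.
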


\begin{definition}\label{CompletedBasis}
For any $(s,x)\in[0,T]\times E$ we will consider the  $(s,x)$-\textbf{completion} $\left(\Omega,\mathcal{F}^{s,x},(\mathcal{F}^{s,x}_t)_{t\in[0,T]},\mathbbm{P}^{s,x}\right)$ of the stochastic basis $\left(\Omega,\mathcal{F},(\mathcal{F}_t)_{t\in[0,T]},\mathbbm{P}^{s,x}\right)$ by defining $\mathcal{F}^{s,x}$ as  the $\mathbbm{P}^{s,x}$-completion of $\mathcal{F}$ , by extending $\mathbbm{P}^{s,x}$ to $\mathcal{F}^{s,x}$ and finally by defining  $\mathcal{F}^{s,x}_t$ as the $\mathbbm{P}^{s,x}$-closure of $\mathcal{F}_t$, for every $t\in[0,T]$. 
\end{definition}

We remark that, for any $(s,x)\in[0,T]\times E$, $\left(\Omega,\mathcal{F}^{s,x},(\mathcal{F}^{s,x}_t)_{t\in[0,T]},\mathbbm{P}^{s,x}\right)$ is a stochastic basis fulfilling the usual conditions, see 1.4 in \cite{jacod} Chapter I. 

We recall the following simple consequence of Remark 32 in \cite{dellmeyer75} Chapter II.
\begin{proposition}\label{Fversion}
	Let $\mathcal{G}$ be a sub-$\sigma$-field of $\mathcal{F}$, $\mathbbm{P}$ a probability on $(\Omega,\mathcal{F})$ and $\mathcal{G}^{\mathbbm{P}}$ the $\mathbbm{P}$-closure of $\mathcal{G}$. Let $Z^{\mathbbm{P}}$ be a real $\mathcal{G}^{\mathbbm{P}}$-measurable random variable. There exists a $\mathcal{G}$-measurable random variable $Z$ such that $Z=Z^{\mathbbm{P}}$ $\mathbbm{P}$-a.s.
\end{proposition}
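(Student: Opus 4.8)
The plan is to prove the statement first for indicators, then lift it to arbitrary real $\mathcal{G}^{\mathbbm{P}}$-measurable random variables by the usual approximation through simple functions, being careful that the object produced in the limit is both genuinely $\mathcal{G}$-measurable and finite-valued.

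First I would use the content of the cited Remark 32 in \cite{dellmeyer75}, Chapter II, which characterizes $\mathcal{G}^{\mathbbm{P}}$ as the collection of sets $A\subseteq\Omega$ for which there exists $B\in\mathcal{G}$ with $\mathbbm{P}(A\triangle B)=0$. Equivalently, every $\mathcal{G}^{\mathbbm{P}}$-measurable set agrees up to a $\mathbbm{P}$-null set with a set of $\mathcal{G}$. Read at the level of indicators, this says exactly that for $A\in\mathcal{G}^{\mathbbm{P}}$ one has $\mathds{1}_A=\mathds{1}_B$ $\mathbbm{P}$-a.s. with $\mathds{1}_B$ being $\mathcal{G}$-measurable. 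This settles the proposition when $Z^{\mathbbm{P}}$ is the indicator of a set of $\mathcal{G}^{\mathbbm{P}}$, and, by taking finite linear combinations, when $Z^{\mathbbm{P}}$ is a simple $\mathcal{G}^{\mathbbm{P}}$-measurable random variable.

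Next I would handle a nonnegative $\mathcal{G}^{\mathbbm{P}}$-measurable $Z^{\mathbbm{P}}$ by choosing an increasing sequence $(Z^{\mathbbm{P}}_n)$ of nonnegative simple $\mathcal{G}^{\mathbbm{P}}$-measurable functions with $Z^{\mathbbm{P}}_n\uparrow Z^{\mathbbm{P}}$ and, for each $n$, a $\mathcal{G}$-measurable $Z_n$ with $Z_n=Z^{\mathbbm{P}}_n$ $\mathbbm{P}$-a.s., by the simple-function case. Setting $N:=\bigcup_n\{Z_n\neq Z^{\mathbbm{P}}_n\}$, which is $\mathbbm{P}$-null as a countable union of $\mathbbm{P}$-null sets, the sequence $(Z_n)$ converges to $Z^{\mathbbm{P}}$ on $N^c$. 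I would then define $Z:=\limsup_n Z_n$ on the $\mathcal{G}$-measurable set where this limit superior is finite, and $Z:=0$ elsewhere; this $Z$ is $\mathcal{G}$-measurable, real-valued, and equals $Z^{\mathbbm{P}}$ on $N^c$, hence $\mathbbm{P}$-a.s. Finally, for a general real $Z^{\mathbbm{P}}$ I would apply this construction to its positive and negative parts $(Z^{\mathbbm{P}})^{+}$ and $(Z^{\mathbbm{P}})^{-}$ and subtract the two $\mathcal{G}$-measurable representatives obtained.

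The argument is essentially routine, the substantive input being already packaged in the cited remark. The only points requiring a little care are to ensure that the passage from simple functions to the general case yields an honestly $\mathcal{G}$-measurable \emph{and} finite-valued random variable, which is why I pass through $\limsup_n Z_n$ restricted to its finiteness set rather than a naive pointwise limit, and to observe that only countably many $\mathbbm{P}$-null sets intervene, so their union remains $\mathbbm{P}$-null. I do not expect any serious obstacle.
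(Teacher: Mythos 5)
Your proof is correct and follows exactly the route the paper intends: the paper gives no written proof, merely citing Remark 32 of Dellacherie--Meyer Chapter II for the set-level statement, and your elaboration (indicators, simple functions, monotone limits with a single countable union of null sets, then positive and negative parts) is the standard and expected way to deduce the random-variable version from that remark. The care you take with the finiteness set of $\limsup_n Z_n$ is a sensible precaution and introduces no discrepancy with the paper's usage.
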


From this we can deduce the following.
\begin{proposition}\label{ConditionalExp} Let $(s,x)\in[0,T]\times E$ be fixed, $Z$ be a random variable and $t\in[s,T]$. Then 
$\mathbbm{E}^{s,x}[Z|\mathcal{F}_t]=\mathbbm{E}^{s,x}[Z|\mathcal{F}^{s,x}_t]$ $\mathbbm{P}^{s,x}$ a.s.
\end{proposition}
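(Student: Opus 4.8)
The plan is to use Proposition \ref{Fversion} to replace the conditional expectation with respect to the completed $\sigma$-field $\mathcal{F}^{s,x}_t$ by an $\mathcal{F}_t$-measurable representative, and then to verify directly that this representative satisfies the characterizing property of $\mathbbm{E}^{s,x}[Z|\mathcal{F}_t]$. Throughout I interpret the statement for $Z$ such that the conditional expectations are well defined (e.g. $Z$ integrable under $\mathbbm{P}^{s,x}$).

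First I would observe that $U := \mathbbm{E}^{s,x}[Z|\mathcal{F}^{s,x}_t]$ is, by construction, $\mathcal{F}^{s,x}_t$-measurable, where by Definition \ref{CompletedBasis} the $\sigma$-field $\mathcal{F}^{s,x}_t$ is precisely the $\mathbbm{P}^{s,x}$-closure of $\mathcal{F}_t$. Applying Proposition \ref{Fversion} with $\mathcal{G}=\mathcal{F}_t$, $\mathbbm{P}=\mathbbm{P}^{s,x}$ and $Z^{\mathbbm{P}}=U$, I obtain an $\mathcal{F}_t$-measurable random variable $Y$ with $Y=U$ $\mathbbm{P}^{s,x}$-a.s.

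Next I would check that $Y$ is a version of $\mathbbm{E}^{s,x}[Z|\mathcal{F}_t]$. Since $Y$ is already $\mathcal{F}_t$-measurable, it remains only to verify the testing identity $\mathbbm{E}^{s,x}[Z\mathds{1}_F]=\mathbbm{E}^{s,x}[Y\mathds{1}_F]$ for every $F\in\mathcal{F}_t$. Here the crucial, if elementary, point is the inclusion $\mathcal{F}_t\subseteq\mathcal{F}^{s,x}_t$: for such an $F$, using the defining property of $U$ as an $\mathcal{F}^{s,x}_t$-conditional expectation and then $Y=U$ a.s., one gets
\begin{equation*}
\mathbbm{E}^{s,x}[Z\mathds{1}_F]=\mathbbm{E}^{s,x}[U\mathds{1}_F]=\mathbbm{E}^{s,x}[Y\mathds{1}_F].
\end{equation*}
Thus $Y$ satisfies the characterization of $\mathbbm{E}^{s,x}[Z|\mathcal{F}_t]$, so $Y=\mathbbm{E}^{s,x}[Z|\mathcal{F}_t]$ $\mathbbm{P}^{s,x}$-a.s.; combining this with $Y=\mathbbm{E}^{s,x}[Z|\mathcal{F}^{s,x}_t]$ a.s. yields the claimed equality.

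I do not expect a genuine obstacle in this argument, as it is a direct consequence of Proposition \ref{Fversion}. The only points deserving care are ensuring the two conditional expectations are well defined and checking the testing identity on all of $\mathcal{F}_t$ rather than merely on a generating subclass; the inclusion $\mathcal{F}_t\subseteq\mathcal{F}^{s,x}_t$ makes the latter immediate, so the whole proof should be short.
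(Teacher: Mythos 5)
Your proof is correct and is essentially the mirror image of the paper's argument: the paper starts from $\mathbbm{E}^{s,x}[Z|\mathcal{F}_t]$ and verifies the testing identity over $\mathcal{F}^{s,x}_t$ by approximating each $G^{s,x}\in\mathcal{F}^{s,x}_t$ by some $G\in\mathcal{F}_t$ with $\mathds{1}_G=\mathds{1}_{G^{s,x}}$ a.s.\ (the set-level form of Proposition \ref{Fversion}), whereas you start from $\mathbbm{E}^{s,x}[Z|\mathcal{F}^{s,x}_t]$, replace it by an $\mathcal{F}_t$-measurable version via Proposition \ref{Fversion}, and then test only over $\mathcal{F}_t\subseteq\mathcal{F}^{s,x}_t$. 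Both arguments rest on the same fact that the completion adds only negligible sets, so this is the same approach up to the direction in which the verification is run.
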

\begin{proof}
	$\mathbbm{E}^{s,x}[Z|\mathcal{F}_t]$ is $\mathcal{F}_t$-measurable and therefore $\mathcal{F}^{s,x}_t$-measurable. Moreover, let $G^{s,x}\in\mathcal{F}^{s,x}_t$, by Remark 32 in \cite{dellmeyer75} Chapter II, there exists $G\in\mathcal{F}_t$ such that 
	\\
	$\mathbbm{P}^{s,x}(G\cup G^{s,x})=\mathbbm{P}^{s,x}(G\backslash G^{s,x})$ implying $\mathds{1}_G=\mathds{1}_{G^{s,x}}$ $\mathbbm{P}^{s,x}$ a.s. So 
	\begin{equation*}
	\begin{array}{rcl}
	\mathbbm{E}^{s,x}\left[\mathds{1}_{G^{s,x}}\mathbbm{E}^{s,x}[Z|\mathcal{F}_t]\right]&=&\mathbbm{E}^{s,x}\left[\mathds{1}_G\mathbbm{E}^{s,x}[Z|\mathcal{F}_t]\right]\\
	&=& \mathbbm{E}^{s,x}\left[\mathds{1}_GZ\right]\\
	&=& \mathbbm{E}^{s,x}\left[\mathds{1}_{G^{s,x}}Z\right],
	\end{array}
	\end{equation*}
	where the second equality occurs  because of the definition of $\mathbbm{E}^{s,x}[Z|\mathcal{F}_t]$.
\end{proof}
In particular, under the probability $\mathbbm{P}^{s,x}$, $(\mathcal{F}_t)_{t\in[0,T]}$-martingales and $(\mathcal{F}^{s,x}_t)_{t\in[0,T]}$-martingales coincide.
\\
\\
We now show that in our setup, a 
canonical 
Markov class verifies the \textbf{Blumenthal 0-1 law} in the following sense.
\begin{proposition}\label{blumenthal}
Let $(s,x)\in[0,T]\times E$ and $F\in\mathcal{F}_{s,s}$. 
Then $\mathbbm{P}^{s,x}(F)$ is equal to $1$ or to $0$;
In other words, $\mathcal{F}_{s,s}$ is $\mathbbm{P}^{s,x}$-trivial.
\end{proposition}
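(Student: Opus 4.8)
The plan is to exploit that $\mathcal{F}_{s,s}$ lies simultaneously in the past $\sigma$-field $\mathcal{F}_s$ and in the future $\sigma$-field $\mathcal{F}_{s,T}$. An event $F\in\mathcal{F}_{s,s}$ is therefore at the same time $\mathcal{F}_s$-measurable---so that conditioning its indicator on $\mathcal{F}_s$ leaves it unchanged---and a member of $\mathcal{F}_{s,T}$---so that the Markov property of Remark \ref{Rfuturefiltration} at time $t=s$ applies to it. Comparing the two resulting expressions for $\mathbbm{P}^{s,x}(F\mid\mathcal{F}_s)$ will force $\mathds{1}_F$ to be $\mathbbm{P}^{s,x}$-a.s. equal to a constant.

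Concretely, suppose first $s<T$. Item 1 of Remark \ref{RemFiltr} with $t=u=s$ gives $\mathcal{F}_{s,s}=\mathcal{F}_s\cap\mathcal{F}_{s,T}$, so $F\in\mathcal{F}_{s,s}$ belongs to both $\mathcal{F}_s$ and $\mathcal{F}_{s,T}$. Because $\mathds{1}_F$ is $\mathcal{F}_s$-measurable,
\begin{equation*}
\mathbbm{P}^{s,x}(F\mid\mathcal{F}_s)=\mathds{1}_F\qquad\mathbbm{P}^{s,x}\text{-a.s.}
\end{equation*}
Because $F\in\mathcal{F}_{s,T}$, the Markov property \eqref{Markov3} with $t=s$ gives
\begin{equation*}
\mathbbm{P}^{s,x}(F\mid\mathcal{F}_s)=\mathbbm{P}^{s,X_s}(F)\qquad\mathbbm{P}^{s,x}\text{-a.s.}
\end{equation*}
Finally, \eqref{Markov1} together with item 3 of Definition \ref{Defp} (taken with $t=s$) yields $\mathbbm{P}^{s,x}(X_s\in A)=P_{s,s}(x,A)=\mathds{1}_A(x)$ for every $A\in\mathcal{B}(E)$, i.e. $X_s=x$ $\mathbbm{P}^{s,x}$-a.s.; hence $\mathbbm{P}^{s,X_s}(F)=\mathbbm{P}^{s,x}(F)$ $\mathbbm{P}^{s,x}$-a.s., a deterministic number.

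Combining the three identities, $\mathds{1}_F=\mathbbm{P}^{s,x}(F)$ $\mathbbm{P}^{s,x}$-a.s.; since the left-hand side takes values only in $\{0,1\}$, so must the constant $\mathbbm{P}^{s,x}(F)$, which is the assertion. The degenerate case $s=T$ is immediate, since there $\mathcal{F}_{s,s}$ reduces to $\sigma(X_T)$, which is $\mathbbm{P}^{T,x}$-trivial as $X_T=x$ $\mathbbm{P}^{T,x}$-a.s. No serious obstacle arises; the only points deserving attention are the decomposition $\mathcal{F}_{s,s}=\mathcal{F}_s\cap\mathcal{F}_{s,T}$, which lets one read $F$ both as a past and as a future event, and the observation that $X_s$ is $\mathbbm{P}^{s,x}$-a.s. constant, which collapses $\mathbbm{P}^{s,X_s}(F)$ to a number.
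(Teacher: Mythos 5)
Your proof is correct and follows essentially the same route as the paper's: both rest on the decomposition $\mathcal{F}_{s,s}=\mathcal{F}_s\cap\mathcal{F}_{s,T}$ from Remark \ref{RemFiltr}, the Markov property \eqref{Markov3} applied at $t=s$, and the fact that $X_s=x$ $\mathbbm{P}^{s,x}$-a.s. The only cosmetic difference is that you conclude directly from $\mathds{1}_F=\mathbbm{P}^{s,x}(F)$ a.s., whereas the paper multiplies by $\mathds{1}_F$ and takes expectations to obtain $\mathbbm{P}^{s,x}(F)=\mathbbm{P}^{s,x}(F)^2$; your separate treatment of the degenerate case $s=T$ is a harmless extra precaution.
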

\begin{proof}
Let $F\in\mathcal{F}_{s,s}$ as 
introduced  in Notation \ref{canonicalspace}.
\\
Since by Remark \ref{RemFiltr}, 
$\mathcal{F}_{s,s}=\mathcal{F}_s\cap\mathcal{F}_{s,T}$, then $F$ belongs to $\mathcal{F}_s$ so by conditioning we get 
\begin{equation*}
\begin{array}{rcl}
\mathbbm{E}^{s,x}[\mathds{1}_F] &=& \mathbbm{E}^{s,x}[\mathds{1}_F\mathds{1}_F]\\
&=&\mathbbm{E}^{s,x}[\mathds{1}_F\mathbbm{E}^{s,x}[\mathds{1}_F|\mathcal{F}_s]]\\
&=& \mathbbm{E}^{s,x}[\mathds{1}_F\mathbbm{E}^{s,X_s}[\mathds{1}_F]],
\end{array}
\end{equation*} 
where the latter  equality comes from \eqref{Markov3} because 
$F\in\mathcal{F}_{s,T}$.
But  $X_s=x$, $\mathbbm{P}^{s,x}$ a.s., so 

\begin{equation*}
\begin{array}{rcl}
\mathbbm{E}^{s,x}[\mathds{1}_F] &=&\mathbbm{E}^{s,x}[\mathds{1}_F\mathbbm{E}^{s,x}[\mathds{1}_F]]\\
&=&\mathbbm{E}^{s,x}[\mathds{1}_F]^2.
\end{array}
\end{equation*}
\end{proof}

\subsection{Examples of canonical Markov classes}
 
We will list here some well-known examples of canonical Markov classes and some more recent ones.
\begin{itemize}
\item Let $E:=\mathbbm{R}^d$ for some $d\in\mathbbm{N}^*$.
We are given 
$b\in\mathcal{B}_b(\mathbbm{R}_+\times \mathbbm{R}^d, \mathbbm{R}^d)$, $\alpha\in\mathcal{C}_b(\mathbbm{R}_+\times \mathbbm{R}^d,S^*_+(\mathbbm{R}^d))$ 
(where $S^*_+(\mathbbm{R}^d)$ is the space of symmetric strictly positive definite matrices of size $d$) and $K$ a L\'evy kernel (this means  that for every $(t,x)\in \mathbbm{R}_+\times \mathbbm{R}^d$, $K(t,x,\cdot)$ is a $\sigma$-finite measure 
on $\mathbbm{R}^d\backslash\{0\}$, $\underset{t,x}{\text{sup}}\int \frac{\|y\|^2}{1+\|y\|^2}K(t,x,dy)<\infty$ and for every Borel set $A\in\mathcal{B}(\mathbbm{R}^d\backslash\{0\})$, 
$(t,x)\longmapsto \int_A \frac{\|y\|^2}{1+\|y\|^2}K(t,x,dy)$ is Borel) such that for any $A\in\mathcal{B}(\mathbbm{R}^d\backslash\{0\})$, 
$(t,x)\longmapsto \int_A \frac{y}{1+\|y\|^2}K(t,x,dy)$ is bounded continuous.
\\
Let $a$ denote the operator defined on some  $\phi\in\mathcal{C}^{1,2}_b(\mathbbm{R}_+\times\mathbbm{R}^d)$ by 
\begin{equation}
	\partial_t\phi + \frac{1}{2}Tr(\alpha\nabla^2\phi) + (b,\nabla \phi) +\int\left(\phi(\cdot,\cdot+y)-\phi-\frac{(y,\nabla \phi)}{1+\|y\|^2}\right)K(\cdot,\cdot,dy)
\end{equation}
In \cite{stroock1975diffusion} (see Theorem 4.3 and the penultimate sentence of its proof), the following is shown. \\
For every $(s,x)\in \mathbbm{R}_+\times \mathbbm{R}^d$, there exists a unique probability $\mathbbm{P}^{s,x}$ on the canonical space (see Definition \ref{canonicalspace}) such that $\phi(\cdot,X_{\cdot})-\int_s^{\cdot}a(\phi)(r,X_r)dr$ is a local martingale for every
 $\phi\in\mathcal{C}^{1,2}_b(\mathbbm{R}_+\times\mathbbm{R}^d)$ and $\mathbbm{P}^{s,x}(X_s=x)=1$. 
Moreover  $(\mathbbm{P}^{s,x})_{(s,x)\in\mathbbm{R}_+\times\mathbbm{R}^d}$ defines a canonical Markov class and its transition kernel is measurable in time.
\\
\item The case $K=0$ was studied extensively in the celebrated
 book \cite{stroock} in which it is also shown that if $b$, $\alpha$ are bounded and continuous in the second variable, then there exists a canonical Markov class with transition kernel  measurable in time $(\mathbbm{P}^{s,x})_{(s,x)\in\mathbbm{R}_+\times\mathbbm{R}^d}$ such that  $\phi(\cdot,X_{\cdot})-\int_s^{\cdot}a(\phi)(r,X_r)dr$ is a local martingale for any $\phi\in\mathcal{C}^{1,2}_b(\mathbbm{R}_+\times\mathbbm{R}^d)$.
\\
\item In \cite{rozkosz}, a canonical Markov class whose transition kernel
 is the weak fundamental solution of a parabolic PDE in divergence form 
is exhibited.
\\
\item In \cite{hsu}, diffusions on manifolds are studied and shown to define canonical Markov classes.
\\
\item Solutions of PDEs with distributional drift are exhibited in \cite{frw1} and shown to define canonical Markov classes.
\end{itemize}
Some of  previous examples were only studied as homogeneous Markov 
processes but can easily be shown to fall in the non-homogeneous setup 
of the present paper as it was illustrated in \cite{paper2}.

\section{Martingale Additive Functionals}\label{A2}

We now introduce the notion of non-homogeneous Additive 
Functional that we use in the paper. This looks to be a good compromise
 between the notion of Additive
 Functional associated to a stochastic system introduced by E.B. Dynkin 
(see for example \cite{dynkin1975additive}) and the more popular notion of 
homogeneous Additive Functional studied extensively, for instance by C. Dellacherie and P.A. Meyer in \cite{dellmeyerD} Chapter XV. This section 
 consists in extending some essential results stated in
 \cite{dellmeyerD} Chapter XV to our setup. 

\begin{definition}\label{DefAF} We denote $\Delta:=\{(t,u)\in[0,T]^2|t\leq u\}$.
	On $(\Omega,\mathcal{F})$, we define a 
\textbf{non-homogeneous Additive Functional} (shortened AF) as a random-field
$A:=(A^t_u)_{(t,u)\in\Delta}$ 
 indexed by $\Delta$ with values in $\mathbbm{R},$  
	verifying the two following conditions.
	\begin{enumerate}
		\item For any $(t,u)\in\Delta$, $A^t_u$ is $\mathcal{F}_{t,u}$-measurable;
		\item for any $(s,x)\in[0,T]\times E$, there exists a real cadlag $\mathcal{F}^{s,x}$-adapted process $A^{s,x}$ (taken equal to zero on $[0,s]$ by convention) such that for any $x\in E$ and $s\leq t\leq u$, $A^t_u = A^{s,x}_u-A^{s,x}_t \,\text{  }\, \mathbbm{P}^{s,x}$ a.s.
	\end{enumerate}
	$A^{s,x}$ will be called the \textbf{cadlag version of $A$ under} $\mathbbm{P}^{s,x}$.
	\\
	\\
	An AF will be called a \textbf{non-homogeneous square integrable Martingale Additive Functional} (shortened square integrable MAF) if under any $\mathbbm{P}^{s,x}$ its cadlag version is a square integrable martingale.
	More generally an AF will be said to verify a certain property 
	(being non-negative, increasing, of bounded variation, square integrable,
	having  $L^1$ terminal value) if under any $\mathbbm{P}^{s,x}$ its cadlag version verifies it.
	\\
	\\
	Finally, given an increasing AF $A$ and an increasing function $V$, $A$ will be said to be
	 \textbf{absolutely continuous with respect to} $V$ if for any 
	$(s,x)\in [0,T]\times E$, $dA^{s,x} \ll dV$ in the sense of stochastic measures.
\end{definition}


In this section for a given MAF $(M^t_u)_{(t,u)\in\Delta}$  we will be able to exhibit
 two AF, denoted respectively by  $([M]^t_u)_{(t,u)\in\Delta}$ 
and    $(\langle M \rangle^t_u)_{(t,u)\in\Delta}$,
which will play respectively 
the role of a quadratic variation and an angular bracket of it.
Moreover we will show 
 that the Radon-Nikodym derivative of the mentioned angular bracket of 
a MAF with respect to our reference function $V$ is 
a time-dependent function of the underlying process.

\begin{proposition}\label{VarQuadAF} 
Let $(M^t_u)_{(t,u)\in\Delta}$ be a square integrable MAF, and for any $(s,x)\in[0,T]\times E$,
  $[M^{s,x}]$ be the quadratic variation of its cadlag version $M^{s,x}$ under $\mathbbm{P}^{s,x}$.
 Then there exists an AF which we will call $([M]^t_u)_{(t,u)\in\Delta}$ and which, for any $(s,x)\in[0,T]\times E$, has  $[M^{s,x}]$
as cadlag version
 under $\mathbbm{P}^{s,x}$. 
\end{proposition}

\begin{proof}
We adapt Theorem 16 Chapter XV in \cite{dellmeyerD} to a non homogeneous 
set-up but the reader must keep in mind that our definition of 
Additive Functional is different from the one related to the
 homogeneous case.
\\
\\
For the whole proof $t<u$ will be fixed. 
 We consider a sequence of subdivisions of $[t,u]$: $t=t^k_1<t^k_2<\cdots<t^k_k=u$ such that $\underset{i<k}{\text{min }}(t^k_{i+1}-t^k_i)
\underset{k\rightarrow \infty}{\longrightarrow} 0$.
Let $(s,x)\in[0,t]\times E$  with corresponding
probability $\mathbbm{P}^{s,x}$.
For any $k$, we have $\underset{i< k}{\sum}\left( M_{t^k_{i+1}}^{t^k_i}\right)^2=\underset{i< k}{\sum}(M^{s,x}_{t^k_{i+1}}-M^{s,x}_{t^k_i})^2$ $\mathbbm{P}^{s,x}$ a.s.,
  so by definition of  quadratic variation we know that 
\begin{equation} \label{B2}
\underset{i< k}{\sum}\left( M_{t^k_{i+1}}^{t^k_i}\right)^2 \underset{k\rightarrow \infty}{\overset{\mathbbm{P}^{s,x}}{\longrightarrow}} [M^{s,x}]_u - [M^{s,x}]_t.
\end{equation}
In the sequel we will construct  an 
$\mathcal{F}_{t,u}$-measurable random variable $[M]^t_u$ such that for any $(s,x)\in[0,t]\times E$, 
$\sum_{i\leq k}\left( M_{t^k_{i+1}}^{t^k_i}\right)^2 \underset{k\rightarrow \infty}{\overset{\mathbbm{P}^{s,x}}{\longrightarrow}} [M]^t_u$. In that case
 $[M]^t_u$ will then be $\mathbbm{P}^{s,x}$ a.s. equal to $[M^{s,x}]_u - [M^{s,x}]_t$.
\\
\\
Let $x \in E$.
Since $M$ is a MAF, for any $k$, $\underset{i< k}{\sum}\left( M_{t^k_{i+1}}^{t^k_i}\right)^2$ is $\mathcal{F}_{t,u}$-measurable and therefore $\mathcal{F}^{t,x}_{t,u}$-measurable. Since $\mathcal{F}^{t,x}_{t,u}$ is complete, the limit in probability of this sequence, $[M^{t,x}]_u-[M^{t,x}]_t$, is still  $\mathcal{F}^{t,x}_{t,u}$-measurable. By Proposition \ref{Fversion}, there is  an $\mathcal{F}_{t,u}$-measurable variable which depends on $(t,x)$, 
that we call $a_t(x,\omega)$ such that
    \begin{equation} \label{B2bis}
 a_t(x,\omega) =  [M^{t,x}]_u-[M^{t,x}]_t, {\mathbb P}^{t,x} \ {\rm a.s.}
\end{equation}
We will show below that there is a jointly measurable version 
of 
$(x,\omega) \mapsto a_t(x,\omega)$.\\

For every integer $n\geq 0$, we set $a^n_t(x,\omega):=n\wedge a_t(x,\omega)$ 
which is in particular limit in probability of $n\wedge\underset{i\leq k}{\sum}\left( M_{t^k_{i+1}}^{t^k_i}\right)^2$ under $\mathbbm{P}^{t,x}$.
\\
For any integers $k,n$ and any $x\in E$, we define the finite positive measures $\mathbbm{Q}^{k,n,x}$, $\mathbbm{Q}^{n,x}$ and $\mathbbm{Q}^x$ on $(\Omega,\mathcal{F}_{t,u})$ by
\begin{enumerate}
\item $\mathbbm{Q}^{k,n,x}(F) := \mathbbm{E}^{t,x}\left[\mathds{1}_F\left(n\wedge\underset{i< k}{\sum}\left( M_{t^k_{i+1}}^{t^k_i}\right)^2\right)\right]$; \\
\item $\mathbbm{Q}^{n,x}(F): = \mathbbm{E}^{t,x}[\mathds{1}_F\left(a^n_t(x,\omega)\right)]$; \\
\item $\mathbbm{Q}^x(F) := \mathbbm{E}^{t,x}[\mathds{1}_F\left(a_t(x,\omega)\right)]$.
\end{enumerate}

When $k$ and $n$ are fixed, for any fixed $F$, by Proposition \ref{Borel}, 
\\
$x\longmapsto\mathbbm{E}^{t,x}\left[F\left(n\wedge\underset{i< k}{\sum}\left( M_{t^k_{i+1}}^{t^k_i}\right)^2\right)\right],$ is Borel. 

Then $n\wedge\underset{i< k}{\sum}\left( M_{t^k_{i+1}}^{t^k_i}\right)^2\overset{\mathbbm{P}^{t,x}}{\underset{k\rightarrow \infty}{\longrightarrow}} a^n_t(x,\omega)$, and this sequence is uniformly bounded by the constant $n$, so the convergence takes place in $L^1$, therefore 
$x\longmapsto\mathbbm{Q}^{n,x}(F)$ is also Borel as the pointwise limit in k of the functions $x\longmapsto\mathbbm{Q}^{k,n,x}(F)$.
 Similarly, $a^n_t(x,\omega)\underset{n\rightarrow \infty}{\overset{a.s.}{\longrightarrow}}a_t(x,\omega)$ and is non-decreasing, so by  monotone convergence theorem, being  a pointwise limit in $n$ of the functions
 $x\longmapsto\mathbbm{Q}^{n,x}(F)$, the function
$x\longmapsto\mathbbm{Q}^x(F)$ is Borel. 
We recall that $\mathcal{F}$ is separable.
%
The just two mentioned properties 
 and the fact that,
for any $x$, we also have (by item 3. above)
 $\mathbbm{Q}^{x}\ll \mathbbm{P}^{t,x}$, allows  to show
(see  Theorem 58 Chapter V in \cite{dellmeyerB})
the existence of a jointly measurable
 (for $\mathcal{B}(E)\otimes\mathcal{F}_{t,u}$) version
  of   $(x,\omega)\mapsto a_t(x,\omega)$, that we recall to be 
densities of $\mathbbm{Q}^{x}$ 
with respect to $\mathbbm{P}^{t,x}$. That version will still be denoted
by the same symbol.
\\
\\ 

We can now set $[M]^t_u(\omega)=a_t(X_t(\omega),\omega)$,
 which is a correctly defined $\mathcal{F}_{t,u}$-measurable random variable. For any $x$, since $\mathbbm{P}^{t,x}(X_t=x)=1$, we have
 the equalities 
\begin{equation} \label{B3bis}
[M]^t_u =a_t(x,\cdot)=[M^{t,x}]_u - [M^{t,x}]_t  \ \mathbbm{P}^{t,x} {\rm a.s.}
\end{equation}
We will moreover prove that
\begin{equation} \label{Etsx}
[M]^t_u = [M^{s,x}]_u - [M^{s,x}]_t\,\text{ }\,\mathbbm{P}^{s,x} \text{ a.s.},
\end{equation}
holds   for every $(s,x)\in[0,t]\times E$, and not just in the case $s=t$ 
that  we have just established in \eqref{B3bis}.
\\
\\
Let us fix $s < t$ 
 and $x \in E$.
We show that under any $\mathbbm{P}^{s,x}$, $[M]^t_u$ is the limit in probability of $\underset{i< k}{\sum}\left( M_{t^k_{i+1}}^{t^k_i}\right)^2$. Indeed, let $\epsilon>0$: the event $\left\{\left|\underset{i< k}{\sum}\left( M_{t^k_{i+1}}^{t^k_i}\right)^2-[M]^t_u\right|>\epsilon\right\}$ belongs to $\mathcal{F}_{t,T}$ so by conditioning and using the Markov property \eqref{Markov3} we have
\begin{equation*}
\begin{array}{rcl}
    &&\mathbbm{P}^{s,x}\left(\left|\underset{i< k}{\sum}\left( M_{t^k_{i+1}}^{t^k_i}\right)^2-[M]^t_u\right|>\epsilon\right)\\ &=& \mathbbm{E}^{s,x}\left[\mathbbm{P}^{s,x}\left(\left|\underset{i< k}{\sum}\left( M_{t^k_{i+1}}^{t^k_i}\right)^2-[M]^t_u\right|>\epsilon\middle|\mathcal{F}_t\right)\right]\\
    &=& \mathbbm{E}^{s,x}\left[\mathbbm{P}^{t,X_t}\left(\left|\underset{i< k}{\sum}\left( M_{t^k_{i+1}}^{t^k_i}\right)^2-[M]^t_u\right|>\epsilon\right)\right].
\end{array}
\end{equation*}
For any fixed $y$, 
by \eqref{B2} and \eqref{B3bis},
$\mathbbm{P}^{t,y}\left(\left|\underset{i< k}{\sum}\left( M_{t^k_{i+1}}^{t^k_i}\right)^2-[M]^t_u\right|>\epsilon\right)$ tends to zero when $k$ goes to infinity, 
 for every realization $\omega$, it yields
 $\mathbbm{P}^{t,X_t}\left(\left|\underset{i< k}{\sum}\left( M_{t^k_{i+1}}^{t^k_i}\right)^2-[M]^t_u\right|>\epsilon\right)$ tends  to zero when 
$k$ goes to infinity. Since this sequence is dominated by the constant $1$,
 that convergence still holds under the expectation 
with respect to the probability  the  probability $\mathbbm{P}^{s,x}$,
thanks to the dominated convergence theorem.
\\
\\
So we have built an $\mathcal{F}_{t,u}$-measurable variable $[M]^t_u$ such that under any $\mathbbm{P}^{s,x}$ with $s\leq t$, 
$[M^{s,x}]_u - [M^{s,x}]_t=[M]^t_u$ a.s. and this concludes the proof.
\end{proof}

We will now extend the result about quadratic variation to the angular bracket of MAFs. The next result can be seen as an extension of Theorem 15 Chapter XV in \cite{dellmeyerD} to a non-homogeneous context.

\begin{proposition}\label{AngleBracketAF}
Let $(B^t_u)_{(t,u)\in\Delta}$ be an increasing AF with $L^1$ terminal value, for any $(s,x)\in[0,T]\times E$, let 
$B^{s,x}$ be its cadlag version under $\mathbbm{P}^{s,x}$ and let $A^{s,x}$ be the predictable dual projection of $B^{s,x}$ in 
$(\Omega,\mathcal{F}^{s,x},(\mathcal{F}^{s,x}_t)_{t\in[0,T]},\mathbbm{P}^{s,x})$. Then there exists an increasing AF with $L^1$ terminal value $
(A^t_u)_{(t,u)\in\Delta}$ such that under any  $\mathbbm{P}^{s,x}$, the cadlag version of $A$ is $A^{s,x}$.
\end{proposition}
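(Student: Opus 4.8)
The plan is to follow closely the architecture of the proof of Proposition \ref{VarQuadAF}, replacing the quadratic sums by the discrete approximations of the dual predictable projection and convergence in probability by weak convergence in $L^1$. Fix $t<u$ and a sequence of subdivisions $t=t^k_1<\cdots<t^k_k=u$ whose mesh tends to $0$. For each $i<k$ set $g_i(y):=\mathbbm{E}^{t^k_i,y}[B^{t^k_i}_{t^k_{i+1}}]$, which is finite (as $B$ is increasing with $L^1$ terminal value) and Borel in $y$ by Proposition \ref{Borel}. The crucial observation is that, for any $(s,x)$ with $s\le t$, additivity gives $B^{s,x}_{t^k_{i+1}}-B^{s,x}_{t^k_i}=B^{t^k_i}_{t^k_{i+1}}$ $\mathbbm{P}^{s,x}$ a.s., and since $B^{t^k_i}_{t^k_{i+1}}$ is $\mathcal{F}_{t^k_i,T}$-measurable, the Markov property \eqref{Markov3} yields $\mathbbm{E}^{s,x}[B^{s,x}_{t^k_{i+1}}-B^{s,x}_{t^k_i}\mid \mathcal{F}_{t^k_i}]=g_i(X_{t^k_i})$ $\mathbbm{P}^{s,x}$ a.s. Hence $S_k:=\sum_{i<k} g_i(X_{t^k_i})$ is $\mathcal{F}_{t,u}$-measurable, non-negative and \emph{independent of} $(s,x)$, while under each $\mathbbm{P}^{s,x}$ (with $s\le t$) it coincides a.s. with the discretized compensator of $B^{s,x}$ over $[t,u]$. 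By the classical construction of the dual predictable projection as a weak limit of such Laplacian approximations (see \cite{dellmeyerB}, and \cite{dellmeyerD} Chapter XV for the homogeneous analogue), $S_k\to A^{s,x}_u-A^{s,x}_t$ in $\sigma(L^1,L^\infty)$ under $\mathbbm{P}^{s,x}$, for every $s\le t$.

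I first treat $s=t$ and build a jointly measurable version. Being a weak $L^1(\mathbbm{P}^{t,x})$-limit of the $S_k$, which belong to the (weakly closed) subspace of $L^1(\mathbbm{P}^{t,x})$ of $\mathcal{F}^{t,x}_{t,u}$-measurable elements, the increment $A^{t,x}_u-A^{t,x}_t$ is $\mathcal{F}^{t,x}_{t,u}$-measurable; Proposition \ref{Fversion} then gives an $\mathcal{F}_{t,u}$-measurable $a_t(x,\cdot)$ equal to it $\mathbbm{P}^{t,x}$ a.s. For $F\in\mathcal{F}_{t,u}$ define the finite measure $\mathbbm{Q}^x(F):=\mathbbm{E}^{t,x}[\mathds{1}_F(A^{t,x}_u-A^{t,x}_t)]$. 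Since $\mathds{1}_F S_k\ge 0$, the map $x\mapsto\mathbbm{E}^{t,x}[\mathds{1}_F S_k]$ is Borel by Proposition \ref{Borel}, and weak convergence gives $\mathbbm{Q}^x(F)=\lim_k\mathbbm{E}^{t,x}[\mathds{1}_F S_k]$, so $x\mapsto\mathbbm{Q}^x(F)$ is Borel (no truncation is needed here, unlike in Proposition \ref{VarQuadAF}). As $\mathbbm{Q}^x\ll\mathbbm{P}^{t,x}$ with density $a_t(x,\cdot)$ and $\mathcal{F}$ is separable, Theorem 58 Chapter V in \cite{dellmeyerB} furnishes a $\mathcal{B}(E)\otimes\mathcal{F}_{t,u}$-measurable version $(x,\omega)\mapsto a_t(x,\omega)$. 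Setting $A^t_u(\omega):=a_t(X_t(\omega),\omega)$ gives, exactly as in Proposition \ref{VarQuadAF}, an $\mathcal{F}_{t,u}$-measurable variable with $A^t_u=A^{t,x}_u-A^{t,x}_t$ $\mathbbm{P}^{t,x}$ a.s.

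It remains to identify $A^t_u$ under $\mathbbm{P}^{s,x}$ for $s<t$, and this is where the weak (rather than probabilistic) nature of the convergence is the main obstacle: the conditioning-and-dominated-convergence argument of Proposition \ref{VarQuadAF}, which relied on convergence in probability, is no longer available. Instead, both $A^t_u$ and $A^{s,x}_u-A^{s,x}_t$ are $\mathcal{F}^{s,x}_{t,u}$-measurable and integrable, so by Proposition \ref{Fversion} it suffices to show $\mathbbm{E}^{s,x}[Y A^t_u]=\mathbbm{E}^{s,x}[Y(A^{s,x}_u-A^{s,x}_t)]$ for every bounded $\mathcal{F}_{t,u}$-measurable $Y$ (testing against the $\mathcal{F}_{t,u}$-versions of $\mathds{1}_{\{A^t_u>A^{s,x}_u-A^{s,x}_t\}}$ and its complement then forces a.s. equality). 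The right-hand side equals $\lim_k\mathbbm{E}^{s,x}[Y S_k]$ by weak convergence. Since $Y S_k$ is $\mathcal{F}_{t,u}\subseteq\mathcal{F}_{t,T}$-measurable, \eqref{Markov3} gives $\mathbbm{E}^{s,x}[Y S_k]=\mathbbm{E}^{s,x}[\mathbbm{E}^{t,X_t}[Y S_k]]$, and for each fixed $y$ weak convergence under $\mathbbm{P}^{t,y}$ yields $\mathbbm{E}^{t,y}[Y S_k]\to\mathbbm{E}^{t,y}[Y A^t_u]$. The integrands are dominated by $\|Y\|_\infty\,\mathbbm{E}^{t,X_t}[B^t_u]$, which is $\mathbbm{P}^{s,x}$-integrable because $\mathbbm{E}^{s,x}[\mathbbm{E}^{t,X_t}[B^t_u]]=\mathbbm{E}^{s,x}[B^{s,x}_u-B^{s,x}_t]<\infty$, so dominated convergence and the Markov property give $\lim_k\mathbbm{E}^{s,x}[Y S_k]=\mathbbm{E}^{s,x}[\mathbbm{E}^{t,X_t}[Y A^t_u]]=\mathbbm{E}^{s,x}[Y A^t_u]$, as required.

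Finally, setting $A^t_t:=0$, the family $(A^t_u)_{(t,u)\in\Delta}$ is $\mathcal{F}_{t,u}$-measurable and admits, under each $\mathbbm{P}^{s,x}$, the predictable dual projection $A^{s,x}$ as cadlag version: $A^{s,x}$ is increasing, cadlag, $\mathcal{F}^{s,x}$-adapted, vanishes on $[0,s]$ (since $B^{s,x}$ does), has $L^1$ terminal value because $\mathbbm{E}^{s,x}[A^{s,x}_T]=\mathbbm{E}^{s,x}[B^{s,x}_T]<\infty$, and satisfies $A^t_u=A^{s,x}_u-A^{s,x}_t$ $\mathbbm{P}^{s,x}$ a.s. for all $s\le t\le u$ by the previous two steps. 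Hence $(A^t_u)_{(t,u)\in\Delta}$ is an increasing AF with $L^1$ terminal value having $A^{s,x}$ as cadlag version under each $\mathbbm{P}^{s,x}$, which concludes the proof.
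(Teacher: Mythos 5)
Your proof is correct in substance, but it reaches the conclusion by a genuinely different route than the paper. Where the paper works directly from the duality identity $\mathbbm{E}^{s,x}[\mathds{1}_F(A^{s,x}_u-A^{s,x}_t)]=\mathbbm{E}^{s,x}[\int_t^u N^{s,x,F}_{r^-}dB^{s,x}_r]$ --- which forces it to build, in Lemma \ref{commonint}, a pathwise-regularized common version $\int_t^u N^F_{r^-}dB_r$ of that integral independent of $(s,x)$, and then to run a $\pi$-system/monotone class argument (Lemmas \ref{lemmabracketAF1}--\ref{lemmabracketAF2} and Remark \ref{RemFiltr}) to obtain the $\mathcal{F}^{s,x}_{t,u}$-measurability of the increment --- you instead take the Rao/Laplacian approximations $S_k=\sum_i g_i(X_{t^k_i})$ as your universal, $(s,x)$-free, $\mathcal{F}_{t,u}$-measurable objects. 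This buys you the measurability of $A^{s,x}_u-A^{s,x}_t$ essentially for free (weak closedness of the subspace of $\mathcal{F}^{s,x}_{t,u}$-measurable elements of $L^1$) and replaces the paper's computation \eqref{E428} by a Markov-property-plus-domination argument on the $S_k$; the joint-measurability step via Theorem 58 Chapter V of \cite{dellmeyerB} is then identical in both proofs. The price is that you import the weak $\sigma(L^1,L^\infty)$ convergence of discrete compensators (Rao's theorem, \cite{dellmeyerB} Chapter VII rather than Chapter XV of \cite{dellmeyerD}), and you should cite it precisely and note why it applies to an arbitrary --- not necessarily nested --- sequence of subdivisions with vanishing mesh (uniform integrability of the $S_k$ for the class (D) process $B^{s,x}$, plus uniqueness of the predictable compensator to identify every subsequential weak limit). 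Two small points to make explicit: the $\mathcal{F}^{s,x}_{t,u}$-measurability of $A^{s,x}_u-A^{s,x}_t$ for $s<t$, which you use in the third paragraph, follows from the same weak-closedness argument applied under $\mathbbm{P}^{s,x}$ and deserves a sentence; and the final equality $\mathbbm{E}^{s,x}[\mathbbm{E}^{t,X_t}[YA^t_u]]=\mathbbm{E}^{s,x}[YA^t_u]$ requires the $\mathbbm{P}^{s,x}$-integrability of $YA^t_u$, which you get from $\mathbbm{E}^{t,y}[A^t_u]=\mathbbm{E}^{t,y}[B^t_u]$ and the Markov property, but which is worth recording. Neither point is a gap in the mathematics; both are routine completions of an argument that is sound and, in places, leaner than the paper's.
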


\begin{proof}
The first half of the demonstration will consist in showing that 
\begin{equation} \label{I422}
\forall  (s,x)\in[0,t]\times E,  \ 
(A^{s,x}_u - A^{s,x}_t) \ {\rm is} \  \mathcal{F}^{s,x}_{t,u}{\rm-measurable.}
\end{equation}
 \\
We start by recalling a property of the predictable dual projection which we will have to extend slightly. 
\\
Let us fix $(s,x)$ and the corresponding stochastic basis 
 $(\Omega,\mathcal{F}^{s,x},(\mathcal{F}^{s,x}_t)_{t\in[0,T]},\mathbbm{P}^{s,x})$. 
For any $F\in \mathcal{F}^{s,x}$, let $N^{s,x,F}$ be the cadlag version of the martingale, 
$r\longmapsto\mathbbm{E}^{s,x}[\mathds{1}_F|\mathcal{F}_r]$. Then for any $0\leq t\leq u \leq T$, the predictable projection of the process $r\mapsto \mathds{1}_F\mathds{1}_{[t,u[}(r)$ is $r\mapsto N^{s,x,F}_{r^-}\mathds{1}_{[t,u[}(r)$, see the proof of Theorem 43 Chapter VI in \cite{dellmeyerB}. Therefore by definition of the dual predictable projection (see Definition 73 Chapter VI in \cite{dellmeyerB}) we have
\begin{equation}\label{dualprojection}
\mathbbm{E}^{s,x}\left[\mathds{1}_F(A^{s,x}_u - A^{s,x}_t)\right]=\mathbbm{E}^{s,x}\left[\int_t^u N^{s,x,F}_{r^-}dB^{s,x}_r\right],
\end{equation}
for any $F\in\mathcal{F}^{s,x}$.
\\
\\
We will now prove some technical lemmas which in a sense extend this property, and will permit us to operate with a good common version of the random variable $\int_t^u N^{s,x,F}_{r^-}dB^{s,x}_r$ not depending on $(s,x)$. 
\\
\\
For the rest of the proof, $0\leq t<u\leq T$ will be fixed.

\begin{notation}\label{NFindep}
	Let $F\in\mathcal{F}_{t,T}$. We denote for any $r\in[t,T],\omega\in\Omega$, $N^F_r(\omega):=\mathbbm{P}^{t,X_t(\omega)}(F)$.
\end{notation}
It is clear that $N^F$ previously introduced is an $(\mathcal{F}_{t,r})_{r\in[t,T]}$-adapted process which does not depend on $(s,x)$, which takes values in $[0,1]$ for all $r,\omega$ and by Remark \ref{Rfuturefiltration}, for any $(s,x)\in[0,t]\times E$, $N^{s,x,F}$ is on $[t,T]$ a $\mathbbm{P}^{s,x}$-version of $N^F$.

\begin{lemma}\label{commonint}
Let  $F\in\mathcal{F}_{t,T}$.  There exists an $\mathcal{F}_{t,u}$-measurable random variable
which we will denote $\int_t^u N^F_{r^-}dB_r$ such that for any 
$(s,x)\in[0,t]\times E$, \\ $\int_t^u N^F_{r^-}dB_r=\int_t^u N^{s,x,F}_{r^-}dB^{s,x}_r$ $\mathbbm{P}^{s,x}$ a.s. 
\end{lemma}
\begin{remark} \label{Rsx}
By definition, the process $N^F$ introduced in Notation \ref{NFindep} and the r.v.
 $\int_t^u N^F_{r^-}dB_r$ will not depend on any $(s,x)$.
\end{remark}

\begin{proof}
In some sense we wish  to integrate $r\mapsto N^F_{r^-}$ against $B^t$
for fixed $\omega$. 
 However first we do not know a priori if the paths $r \mapsto N^F_r$ 
and $r \mapsto B^t_r$ are measurable, second $r \mapsto N^F_r$ 
may not have a left limit and $B^t$ may be not of bounded variation.
So it is not clear if $\int_t^u N^F_{r^-}dB^t_r$  makes sense
for any $\omega$.
 Moreover under a certain $\mathbbm{P}^{s,x}$, 
 $N^{F,s,x}$ and $B^{s,x}_{\cdot}- B^{s,x}_t$ are only versions of
$N^F$ and $B^t$  and  not indistinguishable to them.
Even if we could compute the overmentioned integral, it would not be clear if
 $\int_t^u N^F_{r^-}dB^t_r=\int_t^u N^{s,x,F}_{r^-}dB^{s,x}_r$ $\mathbbm{P}^{s,x}$ a.s.
\\
\\
We start by some considerations about $B$,
setting $W_{tu}:=\{\omega:\underset{r\in[t,u]\cap\mathbbm{Q}}{\text{sup }}B^t_r<\infty\}$ which is $\mathcal{F}_{t,u}$-measurable, and for $r\in[t,u]$
\begin{equation*}
\bar{B}^t_r(\omega) := \left\{\begin{array}{l}
\underset{\substack{t\leq v<r\\\ v\in\mathbbm{Q}}}{\text{sup }}B^t_v(\omega) \text{ if }\omega\in W_{tu}\\
0 \text{ otherwise}.
\end{array}\right.
\end{equation*}
$\bar{B}^t$  is an increasing, finite (for all $\omega$) process. 
In general, it is neither a measurable nor
an  adapted process; however for any $r\in[t,u]$, $\bar{B}^t_r$ is still  $\mathcal{F}_{t,u}$-measurable.
  Since it is increasing, it has right and left limits at each point
 for every $\omega$, so we
 can define  the process $\tilde{B}^t$ indexed on $[t,u]$ 
below:
\begin{equation} \label{EB10}
\tilde{B}^t_r := \underset{\substack{v\downarrow r\\\ v\in\mathbbm{Q}}}{\text{lim }} \bar{B}^t_v, r \in [t,u],
\end{equation}
when $u \in ]t,T[$ and
 $\tilde{B}^t_T :=B^t_T$ if $u=T$.
Therefore $\tilde{B}^t$ is an increasing, cadlag process. 
It is constituted by  $\mathcal{F}_{t,u}$-measurable random variables, 
and by Theorem 15 Chapter IV of \cite{dellmeyer75}, $\tilde{B}^t$ is a 
also a measurable process (indexed by   $[t,u]$).
\\
\\
We can show that $\tilde{B}^t$ is $\mathbbm{P}^{s,x}$-indistinguishable from $B^{s,x}_{\cdot}-B^{s,x}_t$ for any  
\\
$(s,x)\in[0,t]\times E$. Indeed, let
$(s,x)$ be fixed. Since $B^{s,x}_{\cdot}-B^{s,x}_t$ is a version of $B^t$ and 
 $\mathbbm{Q}$ being countable, there exists a  $\mathbbm{P}^{s,x}$-null set $\mathcal{N}$ such that for all $\omega\in\mathcal{N}^c$ and $r\in\mathbbm{Q}\cap[t,u]$, $B^{s,x}_r(\omega)-B^{s,x}_t(\omega)=B^t_r(\omega)$. Therefore for any $\omega\in\mathcal{N}^c$ and $r\in[t,u]$,
\begin{equation*}
\tilde{B}^t_r(\omega) = \underset{\substack{v\downarrow r\\\ v\in\mathbbm{Q}}}{\text{lim }}\underset{\substack{t\leq w<v\\\ w\in\mathbbm{Q}}}{\text{sup }}B^t_w(\omega) = \underset{\substack{v\downarrow r\\\ v\in\mathbbm{Q}}}{\text{lim }}\underset{\substack{t\leq w<v\\\ w\in\mathbbm{Q}}}{\text{sup }}B^{s,x}(\omega)_w-B^{s,x}(\omega)_t=B^{s,x}(\omega)_r-B^{s,x}(\omega)_t,
\end{equation*}
where the latter equality comes from the fact that $B^{s,x}(\omega)$ is cadlag
 and increasing. So we have constructed an increasing finite cadlag
(for all $\omega$) process and so the path
$ r \mapsto \tilde{B}^t(\omega)$ is a Lebesgue integrator on  $[t,u]$
for each  $\omega$.
\\
\\
We fix now $F\in\mathcal{F}_{t,T}$ and we discuss 
some issues related to  $N^F$ . Since it is positive, we can start 
defining the process $\bar{N}$, for index values $ r \in [t,T[$ by
  $\bar{N}^F_r:=\underset{\substack{v\downarrow r\\\ v\in\mathbbm{Q}}}{\text{liminf }}N^F_v,$ and setting $\bar{N}^F_T:=N^F_T$.
 This process is (by similar arguments as for $\tilde{B}^t$ defined 
in \eqref{EB10}), 
$\mathbbm{P}^{s,x}$-indistinguishable to $N^{s,x,F}$ for all $(s,x)\in[0,t]\times E$. 
For any $r\in [t,T]$, $N^F_r$ (see Notation \ref{NFindep}) is $\mathcal{F}_{t,r}$-measurable, 
 so $\bar{N}^F_r$ will also be  $\mathcal{F}_{t,r}$-measurable for any $r\in [t,T]$ by right-continuity of $\mathcal{F}_{t,\cdot}$ (see Definition \ref{canonicalspace}) .
 However, $\bar{N}^F$ is not necessarily cadlag for every $\omega$, and 
also not  necessarily a measurable process.
\\
\\
We subsequently define
\begin{equation*}
W'_{tu}:=\{\omega \in \Omega \vert \text{there exists a cadlag function }f \text{ such that }\bar{N}^F(\omega)=f\text{ on }[t,u]\cap\mathbbm{Q}\}.
\end{equation*}
By Theorem 18 b) in Chapter IV of \cite{dellmeyer75}, $W'_{tu}$ is $\mathcal{F}_{t,u}$-measurable so we can define on $[t,u]$ $\tilde{N}^F_r:=\bar{N}^F_r\mathds{1}_{W'_{tu}}$. $\tilde{N}^F$ is no longer $(\mathcal{F}_t)_{t\in[0,T]}$-adapted,
 however, it is now cadlag for all $\omega$ and therefore a measurable process by Theorem 15 Chapter IV of \cite{dellmeyer75}. The r.v.  $\tilde{N}^F_r$  
are still  $\mathcal{F}_{t,u}$-measurable 
 , and
 $\tilde{N}^F$ is still $\mathbbm{P}^{s,x}$-indistinguishable to $N^{s,x,F}$ on $[t,u]$ for any $(s,x)\in[0,t]\times E$.
\\
\\
Finally we can define $\int_t^u N^F_{r^-}dB_r:=\int_t^u \tilde{N}^F_{r^-}d\tilde{B}^t_r$ which is $\mathbbm{P}^{s,x}$ a.s. equal to $\int_t^u N^{s,x,F}_{r^-}dB^{s,x}_r$ for any $(s,x)\in[0,t]\times E$.

Moreover, since $\tilde{N}^F$ and $\tilde{B}$ are both measurable with respect to 
\\
$\mathcal{B}([t,u])\otimes\mathcal{F}_{t,u}$ 
, then  $\int_t^u N^F_{r^-}dB_r$ is $\mathcal{F}_{t,u}$-measurable.
\end{proof}

The lemma below is a conditional version of the property \eqref{dualprojection}.
\begin{lemma}\label{lemmabracketAF1}
For any $(s,x)\in[0,t]\times E$ and $F\in \mathcal{F}^{s,x}_{t,T}$ we have $\mathbbm{P}^{s,x}$-a.s.
\begin{equation*}
\mathbbm{E}^{s,x}\left[\mathds{1}_{F}(A^{s,x}_u - A^{s,x}_t)\middle|\mathcal{F}_t\right]=\mathbbm{E}^{s,x}\left[\int_t^u N^F_{r^-}dB_r\middle|\mathcal{F}_t\right].
\end{equation*}

\end{lemma}

\begin{proof}
Let $s,x,F$ be fixed. By definition of  conditional expectation, 
we need to show that for any $G\in\mathcal{F}_t$ we have
\begin{equation*}
\mathbbm{E}^{s,x}\left[\mathds{1}_G\mathds{1}_{F}(A^{s,x}_u - A^{s,x}_t)\right]=\mathbbm{E}^{s,x}\left[\mathds{1}_G\mathbbm{E}^{s,x}\left[\int_t^u N^{F}_{r^-}dB_r\middle|\mathcal{F}_t\right]\right] \text{ a.s.}
\end{equation*}
For $r\in[t,u]$ we have 
$\mathbbm{E}^{s,x}[\mathds{1}_{F\cap G}|\mathcal{F}_r]=\mathds{1}_G\mathbbm{E}^{s,x}[\mathds{1}_F|\mathcal{F}_r]$ a.s. therefore the cadlag versions of those
 processes are indistinguishable on $[t,u]$ and the random variables $\int_t^uN^{G\cap F}_{r^-}dB_r$ and $\mathds{1}_G\int_t^u N^{F}_{r^-}dB_r$ as defined in Lemma \ref{commonint} are a.s. equal. So by the non conditional property of dual predictable projection \eqref{dualprojection} we have
\begin{equation*}
\begin{array}{rcl}
    \mathbbm{E}^{s,x}\left[\mathds{1}_G\mathds{1}_{F}(A^{s,x}_u - A^{s,x}_t)\right]&=&\mathbbm{E}^{s,x}\left[\int_t^uN^{G\cap F}_{r^-}dB_r\right]\\
    &=&\mathbbm{E}^{s,x}\left[\mathds{1}_G\int_t^u N^{F}_{r^-}dB_r\right]\\
    &=&\mathbbm{E}^{s,x}\left[\mathds{1}_G\mathbbm{E}^{s,x}\left[\int_t^u N^{F}_{r^-}dB_r\middle|\mathcal{F}_t\right]\right],
\end{array}
\end{equation*}
which concludes the proof.
\end{proof}

\begin{lemma}\label{lemmabracketAF2}
For any $(s,x)\in[0,t]\times E$ and $F\in \mathcal{F}_{t,T}$ we have $\mathbbm{P}^{s,x}$-a.s.,
\begin{equation*}
\mathbbm{E}^{s,x}\left[\mathds{1}_{F}(A^{s,x}_u - A^{s,x}_t)\middle|\mathcal{F}_t\right]=\mathbbm{E}^{s,x}\left[\mathds{1}_{F}(A^{s,x}_u - A^{s,x}_t)\middle|X_t\right].
\end{equation*}
\end{lemma}
\begin{proof}
By Lemma \ref{lemmabracketAF1} we have 
\begin{equation*}
\mathbbm{E}^{s,x}\left[\mathds{1}_{F}(A^{s,x}_u - A^{s,x}_t)|\mathcal{F}_t\right]=\mathbbm{E}^{s,x}\left[\int_t^u N^F_{r^-}dB_r\middle|\mathcal{F}_t\right].
\end{equation*}
By Lemma \ref{commonint}, $\int_t^u N^F_{r^-}dB_r$ is $\mathcal{F}_{t,T}$ measurable so the Markov property \eqref{Markov3} implies 
\begin{equation*}
\mathbbm{E}^{s,x}\left[\int_t^u N^F_{r^-}dB_r\middle|\mathcal{F}_t\right] = \mathbbm{E}^{s,x}\left[\int_t^u N^F_{r^-}dB_r\middle|X_t\right],
\end{equation*}
therefore $\mathbbm{E}^{s,x}\left[\mathds{1}_{F}(A^{s,x}_u - A^{s,x}_t)|\mathcal{F}_t\right]$ is a.s. equal to a $\sigma(X_t)$-measurable r.v and so is a.s. equal to $\mathbbm{E}^{s,x}\left[\mathds{1}_{F}(A^{s,x}_u - A^{s,x}_t)|X_t\right].$
\end{proof}

We are now able to prove \eqref{I422} which is
 the first important issue of the
proof of Proposition \ref{AngleBracketAF}, which states 
that 
 By definition, a predictable dual projection 
is adapted
 so we already know that $(A^{s,x}_u - A^{s,x}_t)$ is $\mathcal{F}^{s,x}_{u}$-measurable, therefore by Remark \ref{RemFiltr}, it is enough to show that it is also $\mathcal{F}^{s,x}_{t,T}$-measurable.
\\
So we are going to show that
\begin{equation} \label{E422}
A^{s,x}_u - A^{s,x}_t = \mathbbm{E}^{s,x}\left[A^{s,x}_u - A^{s,x}_t|\mathcal{F}_{t,T}\right]\, \mathbbm{P}^{s,x}\text{ a.s.}
\end{equation}
For this we will show that 
\begin{equation}\label{EqAFbracket}
\mathbbm{E}^{s,x}\left[\mathds{1}_{F}(A^{s,x}_u - A^{s,x}_t)\right]=\mathbbm{E}^{s,x}\left[\mathds{1}_{F}\mathbbm{E}^{s,x}\left[A^{s,x}_u - A^{s,x}_t|\mathcal{F}_{t,T}\right]\right],
\end{equation}
for any $F \in {\mathcal F}$.
We will  prove \eqref{EqAFbracket} for 
$F\in\mathcal{F}$ event of the form $F=F_t\cap F_{t,T}$ 
with $F_t\in\mathcal{F}_t$ and $F_{t,T}\in\mathcal{F}_{t,T}$.
\\
 By item 4. of Remark \ref{RemFiltr}, such events form a $\pi$-system $\Pi$ which generates $\mathcal{F}$. 

Consequently,  by the monotone class theorem, \eqref{EqAFbracket}
  will remain true for any $F\in\mathcal{F}$ 
 and even in $\mathcal{F}^{s,x}$ since ${\mathbb P}^{s,x}$-null set
 will not impact the equality.  
This will imply \eqref{E422}
 so that $A^{s,x}_u - A^{s,x}_t$ is $\mathcal{F}^{s,x}_{t,T}$-measurable.
\\
At this point, as we have anticipated, we prove \eqref{EqAFbracket} 
for a fixed 
\\
 $F=F_t\cap F_{t,T}\in\Pi$.
 By Lemma \ref{lemmabracketAF2} we have
\begin{eqnarray*} 
\mathbbm{E}^{s,x}\left[\mathds{1}_{F}(A^{s,x}_u - A^{s,x}_t)\right]&=&\mathbbm{E}^{s,x}\left[\mathds{1}_{F_t}\mathbbm{E}^{s,x}\left[\mathds{1}_{F_{t,T}}(A^{s,x}_u - A^{s,x}_t)|\mathcal{F}_t\right]\right]\\
&=&\mathbbm{E}^{s,x}\left[\mathds{1}_{F_t}\mathbbm{E}^{s,x}\left[\mathds{1}_{F_{t,T}}(A^{s,x}_u - A^{s,x}_t)|X_t\right]\right] \\
&=& \mathbbm{E}^{s,x}\left[\mathds{1}_{F_t}\mathbbm{E}^{s,x}\left[\mathbbm{E}^{s,x}\left[\mathds{1}_{F_{t,T}}(A^{s,x}_u - A^{s,x}_t)|\mathcal{F}_{t,T}\right]|X_t\right]\right],
\end{eqnarray*}
where the latter equality holds since 
 $\sigma(X_t)\subset\mathcal{F}_{t,T}$.

Now since $\mathbbm{E}^{s,x}\left[\mathds{1}_{F_{t,T}}(A^{s,x}_u - A^{s,x}_t)|\mathcal{F}_{t,T}\right]$ is $\mathcal{F}_{t,T}$-measurable, 
the   Markov property \eqref{Markov3} allows us 
to substitute the  conditional $\sigma$-field $\sigma(X_t)$ 
with  $\mathcal{F}_t$ and obtain
\begin{eqnarray*} 
    \mathbbm{E}^{s,x}\left[\mathds{1}_{F}(A^{s,x}_u - A^{s,x}_t)\right] &=& 
 \mathbbm{E}^{s,x}\left[\mathds{1}_{F_t}\mathbbm{E}^{s,x}\left[\mathbbm{E}^{s,x}\left[\mathds{1}_{F_{t,T}}(A^{s,x}_u - A^{s,x}_t)|\mathcal{F}_{t,T}\right]|\mathcal{F}_t\right]\right]\\
    &=&\mathbbm{E}^{s,x}\left[\mathds{1}_{F_t}\mathbbm{E}^{s,x}\left[\mathds{1}_{F_{t,T}}(A^{s,x}_u - A^{s,x}_t)|\mathcal{F}_{t,T}\right]\right]\\
    &=&\mathbbm{E}^{s,x}\left[\mathds{1}_{F_t}\mathds{1}_{F_{t,T}}\mathbbm{E}^{s,x}\left[(A^{s,x}_u - A^{s,x}_t)|\mathcal{F}_{t,T}\right]\right]\\
    &=&\mathbbm{E}^{s,x}\left[\mathds{1}_{F}\mathbbm{E}^{s,x}\left[(A^{s,x}_u - A^{s,x}_t)|\mathcal{F}_{t,T}\right]\right].
\end{eqnarray*}
This concludes the proof of \eqref{EqAFbracket}, therefore   \eqref{E422}
 holds  so that  $A^{s,x}_u - A^{s,x}_t$ is $\mathcal{F}^{s,x}_{t,u}$-measurable
and so \eqref{I422} is established.
 This concludes the first part of the proof of Proposition \ref{AngleBracketAF}.
\\
We pass to the second part of the proof of Proposition \ref{AngleBracketAF} 
where  we will  show that for given $ 0 < t < u$ there is  
an $\mathcal{F}_{t,u}$-measurable r.v. $A^t_u$ 
such that for every $(s,x)\in[0,t]\times E$,  $(A^{s,x}_u-A^{s,x}_t)=A^t_u$ $\quad\mathbbm{P}^{s,x}$ a.s.
\\
\\
Similarly to what we did with the quadratic variation in 
Proposition  \ref{VarQuadAF}, we start by noticing that for any $x\in E$, since $(A^{t,x}_u-A^{t,x}_t)$ is $\mathcal{F}^{t,x}_{t,u}$-measurable, there exists by Proposition \ref{Fversion} an $\mathcal{F}_{t,u}$-measurable r.v. $a(x,\omega)$ such that 
\begin{equation}\label{E425}
	a(x,\omega)=A^{t,x}_u-A^{t,x}_t\quad \mathbbm{P}^{t,x}\text{ a.s.}
\end{equation}
As in the proof of Proposition  \ref{VarQuadAF}, we will show the existence of a jointly-measurable version of  $(x,\omega)\mapsto a(x,\omega)$.
\\
For  every $x\in E$  we define 
on $\mathcal{F}_{t,u}$ the positive measure
\begin{equation} \label{E424}
\mathbbm{Q}^x:F\longmapsto\mathbbm{E}^{t,x}\left[\mathds{1}_F(A^{t,x}_u - A^{t,x}_t)\right]=\mathbbm{E}^{t,x}\left[\mathds{1}_Fa(x,\omega)\right].
\end{equation}

By Lemma \ref{commonint},
 and \eqref{dualprojection}, for every
 $F\in \mathcal{F}_{t,u}$ we have
\begin{equation}\label{E426}
	\mathbbm{Q}^x(F)=\mathbbm{E}^{t,x}\left[\int_t^u N^F_{r^-}dB_r\right],
\end{equation}
 and we recall that $\int_t^u N^F_{r^-}dB_r$  does not depend on $x$. 
 So by Proposition \ref{Borel}  $x\longmapsto\mathbbm{Q}^x(F)$ is Borel for any $F$. Moreover, for any $x$,
 $\mathbbm{Q}^x\ll \mathbbm{P}^{t,x}$. Again by Theorem 58 Chapter V in \cite{dellmeyerB}, there exists a  version $(x,\omega)\mapsto a(x,\omega)$ 
measurable for $\mathcal{B}(E)\otimes\mathcal{F}_{t,u}$ of the 
related Radon-Nikodym densities.

 We can now set $A^t_u(\omega) := a(X_t(\omega),\omega)$ 
which is then an  $\mathcal{F}_{t,u}$-measurable r.v.  
\\
Since  $\mathbbm{P}^{t,x}(X_t=x)=1$ and \eqref{E425} hold, we have
\begin{equation}\label{E427}
	A^t_u = a(X_t,\cdot) = a(x,\cdot)= A^{t,x}_u - A^{t,x}_t\quad \mathbbm{P}^{t,x}\text{ a.s.}
\end{equation}

%

We now set $s<t$ and $x\in E$ and we want to show that  we still have 
\\
$A^t_u =  A^{s,x}_u - A^{s,x}_t$ $\mathbbm{P}^{s,x}$ a.s. So, as above, we
 consider  $F\in\mathcal{F}_{t,u}$ and, thanks to 
\eqref{dualprojection}
 we compute 
\\
\begin{equation}\label{E428}
 \begin{array}{rcl}
    \mathbbm{E}^{s,x}\left[\mathds{1}_F(A^{s,x}_u - A^{s,x}_t)\right] &=&\mathbbm{E}^{s,x}\left[\int_t^u N^F_{r^-}dB_r\right]\\
    &=&\mathbbm{E}^{s,x}\left[\mathbbm{E}^{s,x}\left[\int_t^u N^{F}_{r^-}dB_r|\mathcal{F}_t\right]\right]\\
    &=&\mathbbm{E}^{s,x}\left[\mathbbm{E}^{t,X_t}\left[\int_t^u N^F_{r^-}dB_r\right]\right]\\
    &=&\mathbbm{E}^{s,x}\left[\mathbbm{E}^{t,X_t}\left[\mathds{1}_FA^t_u\right]\right]\\
    &=&\mathbbm{E}^{s,x}\left[\mathbbm{E}^{s,x}\left[\mathds{1}_FA^t_u|\mathcal{F}_t\right]\right]\\
    &=&\mathbbm{E}^{s,x}\left[\mathds{1}_FA^t_u\right].
\end{array}
\end{equation}
Indeed, concerning the fourth equality we recall that, by \eqref{E424}, \eqref{E426} and 
\eqref{E427}, we have
$\mathbbm{E}^{t,x}\left[\int_t^u N^F_{r^-}dB_r\right]=\mathbbm{E}^{t,x}\left[\mathds{1}_FA^t_u\right]$ for all $x$, so this equality becomes an equality whatever random variable we plug into $x$. The third and fifth equalities come from the Markov property \eqref{Markov3} since $\int_t^u N^{F}_{r^-}dB_r$ and $A^t_u$ are $\mathcal{F}_{t,T}$-measurable.
\\
Then, adding $\mathbbm{P}^{s,x}$-null sets does not change the validity of \eqref{E428}, so we have for any $F\in\mathcal{F}^{s,x}_{t,u}$ that $\mathbbm{E}^{s,x}\left[\mathds{1}_F(A^{s,x}_u - A^{s,x}_t)\right]=\mathbbm{E}^{s,x}\left[\mathds{1}_FA^t_u\right]$.
\\
Finally, since we had shown in the first half of the proof that $A^{s,x}_u - A^{s,x}_t$ is $\mathcal{F}^{s,x}_{t,u}$-measurable, and since $A^t_u$ also has,
 by construction, the same measurability property, 
we can conclude that $A^{s,x}_u - A^{s,x}_t=A^t_u$  $\mathbbm{P}^{s,x}$ a.s.
\\
\\
Since this holds for every $t\leq u$ and $(s,x)\in[0,t]\times E$, $(A^t_u)_{(t,u)\in\Delta}$ is the desired AF, which ends the proof of 
Proposition \ref{AngleBracketAF}.
\end{proof}

\begin{corollary}\label{AFbracket}
Let $M$, $M'$ be two square integrable MAFs, let $M^{s,x}$ (respectively $M'^{s,x}$) be the cadlag version of $M$ (respectively $M'$) under $\mathbbm{P}^{s,x}$. 
Then there exists a bounded variation AF with $L^1$ terminal condition denoted $\langle M,M'\rangle$ such that under any $\mathbbm{P}^{s,x}$, the cadlag version of $\langle M,M'\rangle$ is $\langle M^{s,x},M'^{s,x}\rangle$. If $M=M'$ the AF $\langle M,M'\rangle$ will be denoted $\langle M\rangle$ and is increasing.
\end{corollary}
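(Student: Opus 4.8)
The plan is to reduce the bilinear statement to its diagonal case $M=M'$ by polarization, and to treat the diagonal case by chaining Proposition \ref{VarQuadAF} with Proposition \ref{AngleBracketAF}.

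First I would construct $\langle M\rangle$ for a single square integrable MAF $M$. By Proposition \ref{VarQuadAF} there is an AF $([M]^t_u)_{(t,u)\in\Delta}$ whose cadlag version under $\mathbbm{P}^{s,x}$ is the quadratic variation $[M^{s,x}]$. Each $M^{s,x}$ being a square integrable martingale vanishing on $[0,s]$, the process $[M^{s,x}]$ is increasing and satisfies $\mathbbm{E}^{s,x}[[M^{s,x}]_T]=\mathbbm{E}^{s,x}[(M^{s,x}_T)^2]<\infty$, so $[M]$ is an \emph{increasing AF with $L^1$ terminal value}. I would then apply Proposition \ref{AngleBracketAF} to $B:=[M]$. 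Since by definition $\langle M^{s,x}\rangle$ is the predictable dual projection of $[M^{s,x}]$ in the basis $(\Omega,\mathcal{F}^{s,x},(\mathcal{F}^{s,x}_t)_{t\in[0,T]},\mathbbm{P}^{s,x})$, that proposition yields an increasing AF with $L^1$ terminal value, denoted $\langle M\rangle$, whose cadlag version under each $\mathbbm{P}^{s,x}$ is exactly $\langle M^{s,x}\rangle$; this already proves the last sentence of the statement.

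For the general case I would invoke the polarization identity $\langle M,M'\rangle=\tfrac14\left(\langle M+M'\rangle-\langle M-M'\rangle\right)$. The preliminary remark is that finite linear combinations of square integrable MAFs are again square integrable MAFs: both conditions of Definition \ref{DefAF} are stable under linear combinations, the cadlag version of a combination being the corresponding combination of cadlag versions, and a linear combination of square integrable martingales is a square integrable martingale. Hence $M+M'$ and $M-M'$ are square integrable MAFs, and the previous step provides increasing AFs $\langle M+M'\rangle$, $\langle M-M'\rangle$ with $L^1$ terminal values. I would then set
\[
\langle M,M'\rangle^t_u:=\tfrac14\left(\langle M+M'\rangle^t_u-\langle M-M'\rangle^t_u\right),
\]
which is again an AF as a linear combination of AFs, of bounded variation and with $L^1$ terminal value as a difference of two increasing $L^1$-terminal-value AFs. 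Its cadlag version under $\mathbbm{P}^{s,x}$ is $\tfrac14(\langle M^{s,x}+M'^{s,x}\rangle-\langle M^{s,x}-M'^{s,x}\rangle)$, which by the classical polarization identity for the angular bracket of square integrable martingales coincides with $\langle M^{s,x},M'^{s,x}\rangle$; when $M=M'$ it reduces to the increasing AF $\langle M\rangle$.

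Since the substantial analytic work (joint measurability in $(x,\omega)$ and the Markov-property transfer from $t$ to $s<t$) is already carried out inside Propositions \ref{VarQuadAF} and \ref{AngleBracketAF}, this corollary is essentially assembly. The only points demanding care, and the place where I expect a reader to want a line of justification, are the identification of $\langle M^{s,x}\rangle$ with the predictable dual projection of $[M^{s,x}]$ so that Proposition \ref{AngleBracketAF} applies verbatim, and the stability of the AF structure under the linear combinations used in the polarization.
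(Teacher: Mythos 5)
Your proposal is correct and follows essentially the same route as the paper: the diagonal case is handled by feeding the increasing $L^1$-terminal-value AF $[M]$ from Proposition \ref{VarQuadAF} into Proposition \ref{AngleBracketAF} and identifying $\langle M^{s,x}\rangle$ as the predictable dual projection of $[M^{s,x}]$, and the general case follows by the polarization identity $\langle M,M'\rangle=\tfrac14\left(\langle M+M'\rangle-\langle M-M'\rangle\right)$ applied to the square integrable MAFs $M+M'$ and $M-M'$. The only difference is that you spell out the stability of the MAF class under linear combinations and the $L^1$ integrability of $[M]_T$, details the paper leaves implicit.
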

\begin{proof}
If $M=M'$, the corollary comes from the combination of Propositions \ref{VarQuadAF} and \ref{AngleBracketAF}, and the fact that the angular bracket of a square integrable martingale is the dual predictable projection of its quadratic variation. 
\\
Otherwise, it is clear that $M+M'$ and $M-M'$ are square integrable MAFs, so we can consider the increasing MAFs $\langle M-M'\rangle$ and $\langle M+M'\rangle$. We introduce the AF
\begin{equation*}
\langle M,M'\rangle = \frac{1}{4}(\langle M+M'\rangle - \langle M-M'\rangle),
\end{equation*}
which by polarization has cadlag version $\langle M^{s,x},M'^{s,x}\rangle$ under $\mathbbm{P}^{s,x}$. $\langle M,M'\rangle$ is therefore a  bounded variation AF with $L^1$ terminal condition.
\end{proof}

We are now going to study the Radon-Nikodym derivative of an increasing continuous AF with respect to 
some measure. The next result can be seen as an extension of Theorem 13 Chapter XV in \cite{dellmeyerD} in a non-homogeneous setup.

\begin{proposition}\label{RadonDerivAF}
Let $A$ be a positive, non-decreasing AF absolutely continuous with respect to some continuous non-decreasing function $V$, and for every $(s,x)\in[0,T[\times E$ let $A^{s,x}$ be the cadlag version of $A$ under $\mathbbm{P}^{s,x}$. There exists a Borel function $h\in\mathcal{B}([0,T]\times E,\mathbbm{R})$ such that for
every $(s,x)\in[0,T]\times E$,  $A^{s,x}=\int_s^{\cdot\vee s}h(r,X_r)dV_r$, in the sense of indistinguishability.
\end{proposition}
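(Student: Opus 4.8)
The plan is to exhibit $h$ as the \emph{diagonal} Radon--Nikodym derivative, with respect to $dV$, of the Borel kernel recording the expected growth of $A$. Concretely, for $t\le u$ and $y\in E$ set $\phi(t,u,y):=\mathbbm{E}^{t,y}[A^t_u]=\mathbbm{E}^{t,y}[A^{t,y}_u]$; by the Markov property \eqref{Markov3}, $\phi(t,u,X_t)$ is a version of $\mathbbm{E}^{s,x}[A^t_u\mid X_t]$, and Proposition \ref{Borel} governs its Borel dependence on the state. I would then define, for $dV$-almost every $t$ and every $y$,
\begin{equation*}
h(t,y):=\limsup_{\substack{\varepsilon\downarrow 0\\ \varepsilon\in\mathbbm{Q}}}\frac{\phi(t,(t+\varepsilon)\wedge T,y)}{V_{(t+\varepsilon)\wedge T}-V_t}\,\mathds{1}_{\{V_{(t+\varepsilon)\wedge T}>V_t\}},
\end{equation*}
which is Borel in $y$ for each fixed $t$; since $h$ is Borel and $X$ is a measurable process, $(r,\omega)\mapsto h(r,X_r)$ is jointly measurable and $\int_s^{\cdot\vee s}h(r,X_r)\,dV_r$ is a well-defined continuous adapted process (continuity coming from $dV$ being atomless). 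Obtaining genuine \emph{joint} Borel measurability in $(t,y)$ is one point where I expect to reuse the measurable-version machinery already developed for Lemma \ref{commonint} (selecting, through Proposition \ref{Fversion} and a parametrised Radon--Nikodym theorem, versions that are jointly measurable), rather than differentiating a fixed $u$-section whose time dependence is not controlled.

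The heart of the argument is the identification $\tfrac{dA^{s,x}}{dV}(r)=h(r,X_r)$, $dV\otimes\mathbbm{P}^{s,x}$-a.e., for every $(s,x)$. Fix $(s,x)$. Since $dA^{s,x}\ll dV$ with $V$ continuous, the pathwise density $h^{s,x}_r:=\tfrac{dA^{s,x}}{dV}(r)$ exists and, by one-dimensional Lebesgue differentiation with respect to $dV$, equals $\lim_{\varepsilon\downarrow0}(A^{s,x}_{r+\varepsilon}-A^{s,x}_r)/(V_{r+\varepsilon}-V_r)=\lim_{\varepsilon\downarrow0}A^r_{r+\varepsilon}/(V_{r+\varepsilon}-V_r)$ for $dV$-a.e. $r$, $\mathbbm{P}^{s,x}$-a.s. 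Conditioning the difference quotient on $\mathcal{F}_r$ and using the Markov property \eqref{Markov3} (the numerator $A^r_{r+\varepsilon}$ being $\mathcal{F}_{r,T}$-measurable) gives $\mathbbm{E}^{s,x}[A^r_{r+\varepsilon}/(V_{r+\varepsilon}-V_r)\mid\mathcal{F}_r]=\phi(r,r+\varepsilon,X_r)/(V_{r+\varepsilon}-V_r)\to h(r,X_r)$. It then remains to reconcile the limit of the quotients with the limit of their conditional means: as $\varepsilon\downarrow0$ the quotient becomes asymptotically measurable with respect to the germ field $\mathcal{F}_{r,r}$, which is $\mathbbm{P}^{r,X_r}$-trivial by the Blumenthal $0$--$1$ law (Proposition \ref{blumenthal}), so the conditional fluctuation vanishes and $h^{s,x}_r=h(r,X_r)$ for $dV\otimes\mathbbm{P}^{s,x}$-a.e. $(r,\omega)$.

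Granting this, the conclusion is immediate: $h(\cdot,X_\cdot)$ and $h^{s,x}$ coincide $dV$-a.e. along $\mathbbm{P}^{s,x}$-almost every path, hence $\int_s^{t\vee s}h(r,X_r)\,dV_r=\int_s^{t\vee s}h^{s,x}_r\,dV_r=A^{s,x}_t$ simultaneously for all $t$, $\mathbbm{P}^{s,x}$-a.s.; both sides being $t$-continuous, this is indistinguishability, for every $(s,x)$. A robust alternative to this last upgrade, useful if one only secures the representation in expectation, is to note via \eqref{Markov3} that $A^{s,x}_\cdot-\int_s^{\cdot}h(r,X_r)dV_r$ is then a continuous (local) martingale of bounded variation starting at $0$, hence null. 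Since $A$ is merely assumed positive and non-decreasing, $\phi$ may a priori be infinite; I would therefore first carry out the construction and the identification under the additional assumption that $A$ has $L^1$ terminal value, and recover the general case by a standard localisation, the pointwise density being insensitive to truncation.

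The main obstacle is precisely the production of a \emph{single} Borel $h$ valid under all $\mathbbm{P}^{s,x}$ at once. The difference quotients defining it involve the off-diagonal kernel $\phi(t,u,\cdot)$, whose joint measurability in the time variable $t$ is \emph{not} granted by Proposition \ref{Borel} (which fixes the random variable while here $A^t_u$ itself moves with $t$), and the differentiation must be performed against the possibly flat, atomless measure $dV$. Controlling these two measurability/differentiation issues, together with the germ-field $0$--$1$ law step that upgrades convergence of the conditional means to convergence of the quotients themselves, is where the real work lies; the Markov property and Proposition \ref{Borel} then merely supply the state-regularity of $\phi$ and the transfer between the different initial conditions.
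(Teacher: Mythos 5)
Your overall strategy---differentiate $A$ along the diagonal, use germ-field triviality (Blumenthal $0$--$1$ law) to turn the derivative into a deterministic function $h(t,X_t)$, and transfer to arbitrary $(s,x)$ by the Markov property---is the same as the paper's. But there are two genuine gaps, both traceable to the fact that you divide by the increment of $V$ directly. First, your identification step is broken: you define $h(t,y)$ as a limit of the \emph{expectations} $\phi(t,t+\varepsilon,y)/(V_{t+\varepsilon}-V_t)=\mathbbm{E}^{t,y}\bigl[A^t_{t+\varepsilon}/(V_{t+\varepsilon}-V_t)\bigr]$, while the pathwise density is the a.s.\ limit of the quotients themselves; equating the two (equivalently, identifying the $\mathcal{F}_{t,t}$-measurable, hence $\mathbbm{P}^{t,y}$-a.s.\ constant, $\liminf$ of the quotients with the limit of their expectations) requires interchanging $\lim_{\varepsilon\downarrow 0}$ with $\mathbbm{E}^{t,y}$. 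The quotients $A^t_{t+\varepsilon}/(V_{t+\varepsilon}-V_t)$ are unbounded and no uniform integrability is available, so neither dominated convergence nor your ``conditional fluctuation vanishes'' remark justifies this. Your proposed repair---first assume $L^1$ terminal value, then ``standard localisation''---does not obviously close the gap either: truncating $A$ destroys additivity, and a localizing sequence of stopping times depends on $(s,x)$, whereas a single Borel $h$ valid under every $\mathbbm{P}^{s,x}$ is required. The paper's proof is engineered precisely to avoid this: it never divides by $V_{t+\varepsilon}-V_t$, but by the increment of the auxiliary AF $C^t_u=A^t_u+(V_u-V_t)+(u-t)$, so that every difference quotient lies in $[0,1]$ and dominated convergence applies at each of the many limit interchanges; the density with respect to $dV$ is recovered only at the end as the ratio $\frac{k}{k'}\mathds{1}_{\{k'\neq 0\}}$ of two densities with respect to $dC$, the set $\{K'^{s,x}=0\}$ being harmless because $dA^{s,x}\ll dV$.

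Second, you explicitly defer the joint Borel measurability of $(t,y)\mapsto h(t,y)$, observing correctly that Proposition \ref{Borel} does not control the dependence on $t$ since the random variable $A^t_{t+\varepsilon}$ itself moves with $t$. This is not a loose end one can wave at: it is the bulk of the paper's proof. There, joint measurability is obtained by approximating with the functions $\tilde k^{n,m}(s,x,t)=\mathbbm{E}^{s\wedge t,x}\bigl[\min_{n\le p\le m}(\cdots)\bigr]$, proving continuity in $t$ for fixed $(s,x)$ (again via dominated convergence, available only because the quotients are bounded by $1$ and $A^{s,x}$, $V$ are continuous), invoking a Carath\'eodory-type joint measurability lemma, and then passing to the limits in $m$ and $n$. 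Without the $[0,1]$ normalization this chain of limit interchanges collapses as well, so the two gaps are really one and the same missing idea. A smaller point: your $h$ is only defined for ``$dV$-almost every $t$'' and via a $\limsup$ taken over rational $\varepsilon$, but the statement requires a genuine Borel function on all of $[0,T]\times E$ to be composed with $X$; the paper's $\liminf$ construction \eqref{EB29} is defined everywhere and valued in $[0,1]$ from the outset.
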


\begin{proof}
We set 
\begin{equation} \label{EDC} 
C^t_u=A^t_u +(V_u-V_t)+(u-t),
\end{equation}
 which is an AF with cadlag versions 
\begin{equation} \label{EDCsx} 
C^{s,x}_t=A^{s,x}_t+V_t+t,
\end{equation}
 and we start by showing the statement
for $A$ and $C$ instead of $A$ and $V$. We introduce the
 intermediary function $C$ so that for any $u>t$ that
 $\frac{A^{s,x}_u-A^{s,x}_t}{C^{s,x}_u-C^{s,x}_t}\in[0,1]$; that property  will 
be used extensively in connections with the application
of  dominated convergence theorem.
\\
\\ 
Since $A^{s,x}$ is non-decreasing for any $(s,x)\in[0,T]\times E$, $A$ can be taken positive (in the sense that $A^t_u(\omega)\geq 0$ for any $(t,u)\in\Delta$ and $\omega\in\Omega$) by considering $A^+$ (defined by $(A^+)^t_u(\omega):=A^t_u(\omega)^+$) instead of $A$.
\\
\\ 
For  $ t \in [0,T[$ we set   
\begin{align}\label{EB29}
K_t &:= \underset{n\rightarrow \infty}{\text{liminf }}\frac{A^t_{t+\frac{1}{n}}}{A^t_{t+\frac{1}{n}}+\frac{1}{n}+(V_{t+\frac{1}{n}}-V_t)} \nonumber \\
&=\underset{n\rightarrow \infty}{\text{lim }}\underset{p\geq n}{\text{inf }}\frac{A^t_{t+\frac{1}{p}}}{A^t_{t+\frac{1}{p}}+\frac{1}{p}+(V_{t+\frac{1}{p}}-V_t)}\\
&=\underset{n\rightarrow \infty}{\text{lim }}\underset{m\rightarrow \infty}{\text{lim  }}\underset{n\leq p \leq m}{\text{min }}\frac{A^t_{t+\frac{1}{p}}}{A^t_{t+\frac{1}{p}}+\frac{1}{p}+(V_{t+\frac{1}{p}}-V_t)}. \nonumber
\end{align}
By positivity, this liminf always exists and belongs to $[0,1]$ since the sequence belongs to $[0,1]$. For every $(s,x)\in[0,T]\times E$, since for all $t\geq s$ and $n\geq 0$, 
\\
$A^t_{t+\frac{1}{n}} = A^{s,x}_{t+\frac{1}{n}}-A^{s,x}_t$ $\mathbbm{P}^{s,x}$ a.s., then $K^{s,x}$ defined by
$K^{s,x}_t:=\underset{n\rightarrow \infty}{\text{liminf }}\frac{A^{s,x}_{t+\frac{1}{n}}-A^{s,x}_t}{C^{s,x}_{t+\frac{1}{n}}-C^{s,x}_t}$ is a $\mathbbm{P}^{s,x}$-version of $K$, for  $ t \in  [s,T[$.
\\
By  Lebesgue Differentiation theorem (see Theorem 12 Chapter XV in \cite{dellmeyerD} for a version of the theorem with a general atomless measure),
 for any $(s,x)$, for  $\mathbbm{P}^{s,x}$-almost all  $\omega$,
since
$ dC^{s,x}(\omega)$ is absolutely continuous with respect
to $dA^{s,x}(\omega)$, 
 $K^{s,x}(\omega)$ is a density of $dA^{s,x}(\omega)$ with respect to $dC^{s,x}(\omega)$. 
\\
\\
We now show that there exists a Borel function $k$ in $\mathcal{B}([0,T[\times E,\mathbbm{R})$
 such that under any $\mathbbm{P}^{s,x}$, $k(t,X_t)$ is on $[s,T[$ a version of $K$ (and therefore of $K^{s,x}$).\\
For every $t\in[0,T[$, $K_t$ is measurable with respect to $\underset{n\geq 0}{\bigcap} \mathcal{F}_{t,t+\frac{1}{n}}=\mathcal{F}_{t,t}$
by construction, taking into account Notation \ref{canonicalspace}.
  So for any $(t,x)\in[0,T]\times E$, by Proposition \ref{blumenthal},  there exists a constant which we denote $k(t,x)$ such that
\begin{equation} \label{EB31} 
K_t =  k(t,x),\quad  \mathbbm{P}^{t,x} {\rm a.s.} 
\end{equation}
For any integers $(n,m)$, we define $k^{n,m}$ by 
\begin{equation*}
(t,x)\mapsto\mathbbm{E}^{t,x}\left[\underset{n\leq p \leq m}{\text{min }}\frac{A^t_{t+\frac{1}{p}}}{A^t_{t+\frac{1}{p}}+\frac{1}{p}+(V_{t+\frac{1}{p}}-V_t)}\right],
\end{equation*}
and $k^n$ by 
\begin{equation} \label{EB33}
(t,x)\mapsto\mathbbm{E}^{t,x}\left[\underset{p\geq n}{\text{inf }}\frac{A^t_{t+\frac{1}{p}}}{A^t_{t+\frac{1}{p}}+\frac{1}{p}+(V_{t+\frac{1}{p}}-V_t)}\right],
\end{equation}
We start  showing that $\tilde{k}^{n,m}$ defined by
\begin{equation} \label{EB32}
\begin{array}{rcl}
    (s,x,t)&\longmapsto& \mathbbm{E}^{s\wedge t,x}\left[\underset{n\leq p\leq m}{\text{min }}\frac{A^t_{t+\frac{1}{p}}}{A^t_{t+\frac{1}{p}}+\frac{1}{p}+(V_{t+\frac{1}{p}}-V_t)}\right],\\ \relax 
    [0,T]\times E\times[0,T[  &\longrightarrow& [0,1],
\end{array}
\end{equation}
is jointly Borel. 
\\ 
If we fix $t$, then by Proposition \ref{Borel} $(s,x)\longmapsto\mathbbm{E}^{s,x}\left[\underset{n\leq p\leq m }{\text{min }}\frac{A^t_{t+\frac{1}{p}}}{A^t_{t+\frac{1}{p}}+\frac{1}{p}+(V_{t+\frac{1}{p}}-V_t)}\right]$ is a Borel function, so by composing with $(s,x)\mapsto (s\wedge t,x)$, then 
\\
$(s,x)\mapsto \tilde{k}^{n,m}(s,x,t)$ is Borel. Moreover, if we fix $(s,x)\in [0,T[\times E$ we  show below that $t\mapsto \tilde{k}^{n,m}(s,x,t)$ is continuous, which by Lemma 4.51 in \cite{aliprantis} implies the joint measurability of $\tilde{k}^{n,m}$.
\\

To show that mentioned continuity property, we first remark that
 $\tilde{k}^{n,m}(s,x,\cdot)$ is constant on $[0,s]$; moreover
 $A^{s,x}$ is continuous ${\mathbb P}^{s,x}$ a.s. $V$ is continuous,
 and the minimum of a finite number of continuous
functions remains continuous.
 Let  $t_q\underset{q \rightarrow \infty}{\longrightarrow} t$
be a converging sequence  in $[s,T[$.
 Then
  $\underset{n\leq p\leq m }{\text{min }}\frac{A^{s,x}_{t_q+\frac{1}{p}}-A^{s,x}_{t_q}}{A^{s,x}_{t_q+\frac{1}{p}}-A^{s,x}_{t_q}+\frac{1}{p}+(V_{t_q+\frac{1}{p}}-V_{t_q})}$ tends a.s. to $\underset{n\leq p\leq m }{\text{min }}\frac{A^{s,x}_{t+\frac{1}{p}}-A^{s,x}_t}{A^{s,x}_{t+\frac{1}{p}}-A^{s,x}_t+\frac{1}{p}+(V_{t+\frac{1}{p}}-V_t)}$ when $q$ tends to infinity. Since for any $s\leq t\leq u$, $A^t_u= A^{s,x}_u-A^{s,x}_t$ $\mathbbm{P}^{s,x}$ a.s., then $\frac{A^{t_q}_{t_q+\frac{1}{p}}}{A^{t_q}_{t_q+\frac{1}{p}}+\frac{1}{p}+(V_{t_q+\frac{1}{p}}-V_{t_q})}$ tends a.s. to $\frac{A^t_{t+\frac{1}{p}}}{A^t_{t+\frac{1}{p}}+\frac{1}{p}+(V_{t+\frac{1}{p}}-V_t)}$.  All those terms being
 smaller than one, by dominated convergence theorem, the mentioned convergence
also  holds under the expectation, hence the announced continuity
related to $\tilde k^{n,m}$ is established.  
 \\
 \\
Since $k^{n,m}(t,y)=\tilde{k}^{n,m}(t,t,y)$, by composition we can deduce that for any $n,m$, $k^{n,m}$ is Borel. By the dominated convergence theorem, $k^{n,m}$ tends pointwise to $k^n$ (which was defined in \eqref{EB33},
  when $m$ goes to infinity so  $k^n$ are also Borel
for every $n$. 
Finally, keeping in mind \eqref{EB29} nd \eqref{EB31}
 we have $\mathbbm{P}^{t,x}$ a.s. 
\begin{equation*}
k(t,x) = K_t = \underset{n\rightarrow \infty}{\text{lim }}\underset{p\geq n}{\text{inf }}\frac{A^t_{t+\frac{1}{p}}}{A^t_{t+\frac{1}{p}}+\frac{1}{p}+(V_{t+\frac{1}{p}}-V_t)}.
\end{equation*}
Taking the expectation and again by the dominated convergence theorem, $k^n$
(defined in \eqref{EB33}) tends pointwise to $k$ when $n$ goes to infinity so $k$ is Borel.
\\
\\
We now show that, for any $(s,x)\in [0,T]\times E$,  $k(\cdot,X_{\cdot})$ is a $\mathbbm{P}^{s,x}$-version of $K$ on $[s,T[$. 
\\
Since $\mathbbm{P}^{t,x}(X_t=x)=1$, we know that for any $t \in [0,T]$, $x\in E$, we have 
$K_t=k(t,x)=k(t,X_t)$ $\mathbbm{P}^{t,x}$-a.s., and we prove below
 that for any $t \in [0,T]$, $(s,x)\in [0,t]\times E$, we have 
$K_t=k(t,X_t)$ $\mathbbm{P}^{s,x}$-a.s.
\\ 
Let $t \in [0,T]$ be fixed.
Since $A$ is an AF, for any $n$, $\frac{A^t_{t+\frac{1}{p}}}{A^t_{t+\frac{1}{p}}+\frac{1}{n}+(V_{t+\frac{1}{n}}-V_t)}$ is $\mathcal{F}_{t,t+\frac{1}{n}}$-measurable.
\\
So the event 
$\left\{\underset{n\rightarrow \infty}{\text{liminf }}\frac{A^t_{t+\frac{1}{n}}}{A^t_{t+\frac{1}{n}}+\frac{1}{n}+(V_{t+\frac{1}{n}}-V_t)}=k(t,X_t)\right\}$ belongs to $\mathcal{F}_{t,T}$ and
 by Markov property \eqref{Markov3}, for any $(s,x)\in[0,t]\times E$, 
we get 
\begin{eqnarray*}
\mathbbm{P}^{s,x}(K_t= k(t,X_t))&=&\mathbbm{E}^{s,x}[\mathbbm{P}^{s,x}\left(K_t= k(t,X_t)\middle|\mathcal{F}_t\right)]  \nonumber \\
 &=&\mathbbm{E}^{s,x}[\mathbbm{P}^{t,X_t}\left(K_t= k(t,X_t)\right)]\\
 &=& 1. \nonumber
\end{eqnarray*}

For any $(s,x)$, the process $k(\cdot,X_{\cdot})$ is therefore on $[s,T[$ a 
$\mathbbm{P}^{s,x}$-modification of $K$ 
 and therefore of $K^{s,x}$.
However it is not yet clear if provides another density of $dA^{s,x}$
 with respect to $dC^{s,x}$, which was defined at \eqref{EDCsx}.
\\
Considering that $(t,u,\omega)\mapsto V_u-V_t$ also defines a positive 
non-decreasing AF absolutely continuous with respect to $C$,
defined in \eqref{EDC},
 we proceed similarly as at the beginning of the proof,
 replacing the AF $A$ with $V$. 
\\ 
\\
Let the process $K'$ be defined by
\begin{equation*}
K'_t = \underset{n\rightarrow \infty}{\text{liminf }}\frac{V_{t+\frac{1}{n}}-V_t}{A^t_{t+\frac{1}{n}}+\frac{1}{n}+(V_{t+\frac{1}{n}}-V_t)},
\end{equation*}
and for any$(s,x)$, let $K'^{s,x}$ be defined on $[s,T[$ by
\begin{equation*}
K'^{s,x}_t = \underset{n\rightarrow \infty}{\text{liminf }}\frac{V_{t+\frac{1}{n}}-V_t}{A^{s,x}_{t+\frac{1}{n}}-A^{s,x}_t+\frac{1}{n}+(V_{t+\frac{1}{n}}-V_t)}.
\end{equation*}
Then, for any $(s,x)$, $K'^{s,x}$ on $[s,T[$ is
 a $\mathbbm{P}^{s,x}$-version of $K'$, and 
it constitutes  a density of $dV(\omega)$ with respect to $dC^{s,x}(\omega)$ on $[s,T[$, for almost all $\omega$. One shows then the  existence of
 a Borel function $k'$ such that for any $(s,x)$, $k'(\cdot,X_{\cdot})$ is a $\mathbbm{P}^{s,x}$-version of $K'$ and a modification 
of $K'^{s,x}$ on $[s,T[$.
\\
So for any $(s,x)$,   under $\mathbbm{P}^{s,x},$ we can write

\begin{equation*}
\left\{\begin{array}{rcl}
A^{s,x}&=&\int_s^{\cdot\vee s}K^{s,x}_rdC^{s,x}_r \\
V_{\cdot\vee s} - V_s &=& \int_s^{\cdot\vee s} K'^{s,x}_rdC^{s,x}_r
\end{array}\right.
\end{equation*}
Now since $dA^{s,x}\ll dV$, for a fixed $\omega$, the set
 $\{r \in [s,T]  |K'^{s,x}_r(\omega)=0\}$ is negligible with respect to 
 $dV$ so also for $dA^{s,x}(\omega)$ and therefore
 we can write
\begin{equation*}
\begin{array}{rcl}
A^{s,x}&=&\int_s^{\cdot\vee s}K^{s,x}_rdC^{s,x}_r \\
&=&\int_s^{\cdot\vee s}\frac{K^{s,x}_r}{ K'^{s,x}_r}
\mathds{1}_{\{K'^{s,x}_r\neq 0\}}K'^{s,x}_rdC^{s,x}_r \\ 
&+& \int_s^{\cdot\vee s}   \mathds{1}_{\{K'^{s,x}_r = 0\}}  dA^{s,x}_r 
  \\ 
&=&\int_s^{\cdot\vee s}\frac{K^{s,x}_r}{K'^{s,x}_r}\mathds{1}_{\{K'^{s,x}_r\neq 0\}}dV_r,
\end{array}
\end{equation*}
where we use the convention  that
for any two functions $\phi,\psi$ then $\frac{\phi}{\psi}\mathds{1}_{\psi\neq 0}$ is defined by by
\begin{equation*}
\frac{\phi}{\psi}\mathds{1}_{\{\psi\neq 0\}}(x)=\left\{\begin{array}{l}
\frac{\phi(x)}{\psi(x)}\text{ if } \psi(x)\neq 0\\
0\text{ if } \psi(x)=0.
\end{array}\right.
\end{equation*}
We set now
$h:=\frac{k}{k'}\mathds{1}_{\{k'_r\neq 0\}}$ which is Borel, and clearly for any $(s,x)$, $h(t,X_t)$ is a $\mathbbm{P}^{s,x}$-version of $H^{s,x}:=\frac{K^{s,x}}{K'^{s,x}}\mathds{1}_{\{K'^{s,x}\neq 0\}}$ on $[s,T[$. So by Lemma 5.12 in \cite{paper1preprint},  \\
$H^{s,x}_t=h(t,X_t)$ $dV\otimes d\mathbbm{P}^{s,x}$ a.e. 
and finally we have shown that under any $\mathbbm{P}^{s,x}$, 
\\
$A^{s,x}=\int_s^{\cdot\vee s}h(r,X_r)dV_r$ on $[0,T[$.
Without change of notations we extend $h$ to $[0,T] \times E$
by zero for $t = T$.
Since $A^{s,x}$ is continuous  $\mathbbm{P}^{s,x}$-a.s.
previous equality extends to $T$.

\end{proof}

\begin{proposition}\label{VarTotAF}
	Let $(A^t_u)_{(t,u)\in\Delta}$ be an AF with bounded variation and taking $L^1$ values. 
	Then there exists an increasing AF which we denote $(Pos(A)^t_u)_{(t,u)\in\Delta}$ (resp. $(Neg(A)^t_u)_{(t,u)\in\Delta}$ ) and which, for any $(s,x)\in[0,T]\times E$, has  	$Pos(A^{s,x})$ (resp. $Neg(A^{s,x})$)
	as cadlag version
	under $\mathbbm{P}^{s,x}$. 
\end{proposition}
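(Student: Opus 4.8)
The plan is to follow the scheme of the proof of Proposition \ref{VarQuadAF}: I would first build the \emph{total variation} as an AF and then recover $Pos(A)$ and $Neg(A)$ by the Jordan decomposition. Fix $t<u$ and, as in Proposition \ref{VarQuadAF}, a sequence of subdivisions $t=t^k_1<\cdots<t^k_k=u$ whose mesh tends to $0$, which we may and do take refining. For any $(s,x)\in[0,t]\times E$ one has $\sum_{i<k}\lvert A^{t^k_i}_{t^k_{i+1}}\rvert=\sum_{i<k}\lvert A^{s,x}_{t^k_{i+1}}-A^{s,x}_{t^k_i}\rvert$ $\mathbbm{P}^{s,x}$-a.s.; since for a.e.\ $\omega$ the path $r\mapsto A^{s,x}_r(\omega)$ is cadlag and of bounded variation, the triangle inequality makes these sums non-decreasing under refinement, and their limit is the total variation $Var(A^{s,x})_u(\omega)-Var(A^{s,x})_t(\omega)$. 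Thus, in contrast with \eqref{B2}, the convergence here holds $\mathbbm{P}^{s,x}$-almost surely and monotonically.

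The construction of a common $\mathcal{F}_{t,u}$-measurable version would then be identical to that of Proposition \ref{VarQuadAF}. For fixed $(t,x)$ the limit $Var(A^{t,x})_u-Var(A^{t,x})_t$ is $\mathcal{F}^{t,x}_{t,u}$-measurable, so Proposition \ref{Fversion} provides an $\mathcal{F}_{t,u}$-measurable $a_t(x,\omega)$ equal to it $\mathbbm{P}^{t,x}$-a.s. To make $(x,\omega)\mapsto a_t(x,\omega)$ jointly measurable I would introduce the truncations $a^n_t:=n\wedge a_t$ and the measures $\mathbbm{Q}^{k,n,x},\mathbbm{Q}^{n,x},\mathbbm{Q}^x$ on $(\Omega,\mathcal{F}_{t,u})$ defined verbatim as in Proposition \ref{VarQuadAF}, with $\sum_{i<k}(M^{t^k_i}_{t^k_{i+1}})^2$ replaced by $\sum_{i<k}\lvert A^{t^k_i}_{t^k_{i+1}}\rvert$. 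Proposition \ref{Borel}, together with the facts that $n\wedge\sum_{i<k}\lvert A^{t^k_i}_{t^k_{i+1}}\rvert$ converges to $a^n_t$ in $L^1(\mathbbm{P}^{t,x})$ (being bounded by $n$) and then monotone convergence in $n$, shows that $x\mapsto\mathbbm{Q}^x(F)$ is Borel; since $\mathbbm{Q}^x\ll\mathbbm{P}^{t,x}$, Theorem 58 Chapter V in \cite{dellmeyerB} yields a $\mathcal{B}(E)\otimes\mathcal{F}_{t,u}$-measurable version of the densities. Setting $Var(A)^t_u(\omega):=a_t(X_t(\omega),\omega)$ gives, because $\mathbbm{P}^{t,x}(X_t=x)=1$, an $\mathcal{F}_{t,u}$-measurable variable equal to $Var(A^{t,x})_u-Var(A^{t,x})_t$ $\mathbbm{P}^{t,x}$-a.s.

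To extend this identity from $s=t$ to arbitrary $s<t$ I would reproduce the conditioning argument of Proposition \ref{VarQuadAF}: the event $\{\lvert\sum_{i<k}\lvert A^{t^k_i}_{t^k_{i+1}}\rvert-Var(A)^t_u\rvert>\epsilon\}$ belongs to $\mathcal{F}_{t,T}$, so conditioning on $\mathcal{F}_t$ and using the Markov property \eqref{Markov3} expresses its $\mathbbm{P}^{s,x}$-probability as $\mathbbm{E}^{s,x}[\mathbbm{P}^{t,X_t}(\cdots)]$; the inner probabilities tend to $0$ as $k\to\infty$ and are bounded by $1$, so dominated convergence gives $Var(A)^t_u=Var(A^{s,x})_u-Var(A^{s,x})_t$ $\mathbbm{P}^{s,x}$-a.s.\ for every $(s,x)\in[0,t]\times E$. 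Hence $(Var(A)^t_u)_{(t,u)\in\Delta}$ is an increasing AF with cadlag version $Var(A^{s,x})$. I would then set $Pos(A):=\tfrac12(Var(A)+A)$ and $Neg(A):=\tfrac12(Var(A)-A)$; these linear combinations of AFs again satisfy the two conditions of Definition \ref{DefAF} (their increments are $\mathcal{F}_{t,u}$-measurable and additive), their cadlag versions under $\mathbbm{P}^{s,x}$ are $\tfrac12(Var(A^{s,x})\pm A^{s,x})$, which are exactly $Pos(A^{s,x})$ and $Neg(A^{s,x})$ by the Jordan decomposition, and both are increasing.

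The main technical obstacle is the one already met in Proposition \ref{VarQuadAF}, namely the production of the jointly measurable density $a_t(x,\omega)$ through Theorem 58 Chapter V in \cite{dellmeyerB}, for which the separability of $\mathcal{F}$ and the Borel dependence $x\mapsto\mathbbm{Q}^x(F)$ are essential. The genuinely new point to verify is the deterministic real-analysis fact that, for a cadlag function of bounded variation, the sums of absolute increments converge monotonically, under refinement, to the total variation of the path; everything else transfers mechanically from the quadratic-variation case, the absolute value playing the role previously played by the square.
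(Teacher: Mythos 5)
Your proof is correct and follows essentially the same route as the paper: both reduce to the machinery of Proposition \ref{VarQuadAF} applied to sums of increments converging almost surely (hence in probability) to the relevant variation, combined with the Jordan identities $Pos(A^{s,x})=\tfrac12(Var(A^{s,x})+A^{s,x})$ and $Neg(A^{s,x})=\tfrac12(Var(A^{s,x})-A^{s,x})$. The only (immaterial) difference is that the paper applies these identities at the level of increments, running the Proposition \ref{VarQuadAF} argument directly on $\sum_{i<k}\bigl(A^{t^k_i}_{t^k_{i+1}}\bigr)^{+}$ and $\sum_{i<k}\bigl(A^{t^k_i}_{t^k_{i+1}}\bigr)^{-}$ to obtain $Pos(A)$ and $Neg(A)$, whereas you run it on $\sum_{i<k}\bigl|A^{t^k_i}_{t^k_{i+1}}\bigr|$ to obtain $Var(A)$ first and then form the linear combinations with $A$ afterwards.
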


\begin{proof}
By definition of the total variation of a  bounded variation function,
 the following holds. For every $(s,x)\in[0,T]\times E$,
  $s\leq t\leq u \leq T$ for $\mathbbm{P}^{s,x}$ almost all 
  $\omega\in\Omega$,
and any sequence of subdivisions of $[t,u]$: $t=t^k_1<t^k_2<\cdots<t^k_k=u$ such that $\underset{i<k}{\text{min }}(t^k_{i+1}-t^k_i)
\underset{k\rightarrow \infty}{\longrightarrow} 0$ we have 
\begin{equation}
\underset{i< k}{\sum} |A^{s,x}_{t^k_{i+1}}(\omega) - A^{s,x}_{t^k_i}(\omega)| \underset{k\rightarrow \infty}{\longrightarrow}Var(A^{s,x})_u(\omega)-Var(A^{s,x})_t(\omega),
\end{equation}
taking into account the considerations of the end of Section \ref{SPrelim}.
By Proposition 3.3 in \cite{jacod} Chapter I, 
we have $Pos(A^{s,x})=\frac{1}{2}(Var(A^{s,x})+A^{s,x})$ and $Neg(A^{s,x})=\frac{1}{2}(Var(A^{s,x})-A^{s,x})$. Moreover, for any $x\in\mathbbm{R}$ we know that $x^+=\frac{1}{2}(|x|+x)$ and $x^-=\frac{1}{2}(|x|-x)$, so we also have
\begin{equation}
\left\{
\begin{array}{rcl}
\underset{i< k}{\sum} (A^{s,x}_{t^k_{i+1}}(\omega) - A^{s,x}_{t^k_i}(\omega))^+ &\underset{k\rightarrow \infty}{\longrightarrow}&Pos(A^{s,x})_u(\omega)-Pos(A^{s,x})_t(\omega)\\
\underset{i< k}{\sum} (A^{s,x}_{t^k_{i+1}}(\omega) - A^{s,x}_{t^k_i}(\omega))^- &\underset{k\rightarrow \infty}{\longrightarrow}&Neg(A^{s,x})_u(\omega)-Neg(A^{s,x})_t(\omega),
\end{array}
\right.
\end{equation}
for  $\mathbbm{P}^{s,x}$ almost all $\omega$.
Since the convergence a.s. implies the convergence in probability, 
 for every $(s,x)\in[0,T]\times E$,  $s\leq t\leq u$ and any sequence of subdivisions of $[t,u]$: $t=t^k_1<t^k_2<\cdots<t^k_k=u$ such that
 $\underset{i<k}{\text{min }}(t^k_{i+1}-t^k_i)
\underset{k\rightarrow \infty}{\longrightarrow} 0$, we have
\begin{equation}
\left\{
\begin{array}{rcl}
\underset{i< k}{\sum}\left( A_{t^k_{i+1}}^{t^k_i}\right)^+ &\underset{k\rightarrow \infty}{\overset{\mathbbm{P}^{s,x}}{\longrightarrow}}& Pos(A^{s,x})_u - Pos(A^{s,x})_t\\
\underset{i< k}{\sum}\left( A_{t^k_{i+1}}^{t^k_i}\right)^- &\underset{k\rightarrow \infty}{\overset{\mathbbm{P}^{s,x}}{\longrightarrow}}& Neg(A^{s,x})_u - Neg(A^{s,x})_t.
\end{array}
\right.
\end{equation}
The proof can now be performed according to the same arguments as in the proof of Proposition \ref{VarQuadAF}, replacing $M$ with $A$, the quadratic increments with the positive (resp. negative) increments, and the quadratic variation with the positive (resp. negative) variation of an adapted process.
\end{proof}

We recall a definition and a result from \cite{paper1preprint}. We assume for now that we are given a fixed stochastic basis fulfilling the usual conditions, and a non-decreasing function $V$.
\begin{notation}\label{H2V}
We denote $\mathcal{H}^{2,V} := \{M\in\mathcal{H}^2_0|d\langle M\rangle \ll dV\}$ and $\mathcal{H}^{2,\perp V} := \{M\in\mathcal{H}^2_0|d\langle M\rangle \perp dV\}$.
\end{notation}
Proposition 3.6 in  \cite{paper1preprint} states the following.  
\begin{proposition}\label{DecompoMart}
$\mathcal{H}^{2,V}$ and $\mathcal{H}^{2,\perp V}$ are orthogonal sub-Hilbert spaces of $\mathcal{H}^2_0$ and $\mathcal{H}^2_0 = \mathcal{H}^{2,V}\oplus^{\perp}\mathcal{H}^{2,\perp V}$. Moreover, any element of $\mathcal{H}^{2,V}_{loc}$ 
 is strongly orthogonal to any element of $\mathcal{H}^{2,\perp V}_{loc}$.
\end{proposition}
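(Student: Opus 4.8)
The plan is to first establish the strong orthogonality of the two classes, then to produce an explicit splitting of an arbitrary element of $\mathcal{H}^2_0$, and finally to combine these two facts by identifying each space as the orthogonal complement of the other (which yields at once that both are closed, hence genuine sub-Hilbert spaces, and that the sum is orthogonal). Throughout I would use that the scalar product is $(M,N)_{\mathcal{H}^2}=\mathbbm{E}[\langle M,N\rangle_T]$, so that strong orthogonality $\langle M,N\rangle=0$ implies orthogonality in $\mathcal{H}^2_0$. For the first step, let $M\in\mathcal{H}^{2,V}$ and $N\in\mathcal{H}^{2,\perp V}$. By the Kunita--Watanabe inequality, for almost every $\omega$ the total variation measure $|d\langle M,N\rangle|(\omega)$ is absolutely continuous both with respect to $d\langle M\rangle(\omega)$ and with respect to $d\langle N\rangle(\omega)$. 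But $d\langle M\rangle(\omega)\ll dV$ while $d\langle N\rangle(\omega)\perp dV$, so these two brackets are carried by complementary Borel subsets of $[0,T]$; splitting an arbitrary Borel set along this partition and using each absolute continuity on the corresponding piece forces $d\langle M,N\rangle(\omega)=0$. Hence $\langle M,N\rangle=0$ and in particular $(M,N)_{\mathcal{H}^2}=0$. The same computation with $M=N$ gives $\mathcal{H}^{2,V}\cap\mathcal{H}^{2,\perp V}=\{0\}$ (a bracket that is simultaneously $\ll dV$ and $\perp dV$ vanishes), and a further short use of Kunita--Watanabe on cross terms shows that both $\mathcal{H}^{2,V}$ and $\mathcal{H}^{2,\perp V}$ are linear subspaces.

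For the decomposition, given $M\in\mathcal{H}^2_0$ I would work on the predictable $\sigma$-field of $\Omega\times[0,T]$ with the Dol\'eans measure $\mu_{\langle M\rangle}$ of $\langle M\rangle$ and with the reference measure $\mu_V=\mathbbm{P}\otimes dV$. Taking the Lebesgue decomposition of $\mu_{\langle M\rangle}$ with respect to $\mu_V$ furnishes a predictable set $D$ with $\mu_V(D)=0$ carrying the singular part. I then set $M^{\perp V}:=\mathds{1}_D\cdot M$ and $M^V:=\mathds{1}_{D^c}\cdot M$ (stochastic integrals of bounded predictable integrands, again in $\mathcal{H}^2_0$ and starting at zero), so that $M=M^V+M^{\perp V}$. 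Because $V$ is deterministic, $\mu_V$ factorizes as a product, and this is precisely what transfers the measure-theoretic decomposition to the path-wise statements in the definitions: from $\mu_{\langle M^V\rangle}=\mathds{1}_{D^c}\mu_{\langle M\rangle}\ll\mu_V$ one reads off a predictable density $g$ with $\langle M^V\rangle_\cdot(\omega)=\int_0^\cdot g(\omega,r)\,dV_r$ for almost every $\omega$, whence $d\langle M^V\rangle(\omega)\ll dV$; and from $\mu_V(D)=0$ the section $D(\omega)$ is $dV$-negligible for almost every $\omega$, while $d\langle M^{\perp V}\rangle(\omega)=\mathds{1}_{D(\omega)}\,d\langle M\rangle(\omega)$ is carried by $D(\omega)$, so $d\langle M^{\perp V}\rangle(\omega)\perp dV$. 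Thus $M^V\in\mathcal{H}^{2,V}$, $M^{\perp V}\in\mathcal{H}^{2,\perp V}$ and $\mathcal{H}^2_0=\mathcal{H}^{2,V}+\mathcal{H}^{2,\perp V}$.

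Combining the two steps, I would identify $\mathcal{H}^{2,V}=(\mathcal{H}^{2,\perp V})^{\perp}$: the inclusion from left to right is the orthogonality of the first step, and conversely if $M\perp\mathcal{H}^{2,\perp V}$ then decomposing $M=M^V+M^{\perp V}$ and testing against $M^{\perp V}$ gives $0=(M,M^{\perp V})_{\mathcal{H}^2}=\|M^{\perp V}\|_{\mathcal{H}^2}^2$, so $M=M^V\in\mathcal{H}^{2,V}$; symmetrically $\mathcal{H}^{2,\perp V}=(\mathcal{H}^{2,V})^{\perp}$. As orthogonal complements they are closed, hence sub-Hilbert spaces, and the second step together with orthogonality gives the orthogonal direct sum $\mathcal{H}^2_0=\mathcal{H}^{2,V}\oplus^{\perp}\mathcal{H}^{2,\perp V}$. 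For the final assertion, given $M\in\mathcal{H}^{2,V}_{loc}$ and $N\in\mathcal{H}^{2,\perp V}_{loc}$ I would fix a common localizing sequence $(\tau_k)$; stopping preserves both classes, since $d\langle M^{\tau_k}\rangle=\mathds{1}_{[0,\tau_k]}\,d\langle M\rangle$ inherits $\ll dV$ and likewise for $N$, so the first step yields $\langle M^{\tau_k},N^{\tau_k}\rangle=0$. Since $\langle M,N\rangle^{\tau_k}=\langle M^{\tau_k},N^{\tau_k}\rangle$ and $(\tau_k)$ is localizing, letting $k$ increase gives $\langle M,N\rangle=0$.

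The main obstacle is the decomposition step, and more precisely the passage from the Lebesgue decomposition of the Dol\'eans measures on $\Omega\times[0,T]$ to the path-wise absolute continuity and singularity occurring in the definitions of $\mathcal{H}^{2,V}$ and $\mathcal{H}^{2,\perp V}$. This is exactly where the hypothesis that $V$ is a deterministic function — so that $\mu_V=\mathbbm{P}\otimes dV$ factorizes — is genuinely used; one must also verify that the carrying set $D$ and the density $g$ can be chosen in a predictable (measurable) way so that the stochastic integrals $\mathds{1}_D\cdot M$ and $\mathds{1}_{D^c}\cdot M$ are well defined and have the announced brackets.
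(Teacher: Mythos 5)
The paper does not actually prove this proposition: it is imported verbatim as Proposition 3.6 of the companion preprint \cite{paper1preprint}, so there is no in-paper argument to compare yours against. Judged on its own terms, your proof is correct and is the natural one. The orthogonality step via Kunita--Watanabe (the total variation of $d\langle M,N\rangle(\omega)$ is absolutely continuous with respect to both $d\langle M\rangle(\omega)$ and $d\langle N\rangle(\omega)$, and these two measures are carried by complementary Borel sets when $d\langle M\rangle\ll dV$ and $d\langle N\rangle\perp dV$) is exactly right, as is the deduction that both classes are linear subspaces with trivial intersection. The decomposition is the only delicate step and you have correctly isolated where the work lies: starting from the Lebesgue decomposition of the Dol\'eans measure $\mu_{\langle M\rangle}$ with respect to $\mu_V=\mathbbm{P}\otimes dV$ on the predictable $\sigma$-field, you need (i) the predictable Radon--Nikodym theorem (for two predictable increasing processes $A,B$ with $\mu_A\ll\mu_B$ there is a nonnegative predictable $g$ with $A=\int_0^\cdot g\,dB$ up to indistinguishability) to convert $\mu_{\langle \mathds{1}_{D^c}\cdot M\rangle}\ll\mu_V$ into the pathwise statement $d\langle M^V\rangle(\omega)\ll dV$ --- this applies because $\langle M^V\rangle$ and the deterministic $V$ are both predictable --- and (ii) Fubini, via the product structure of $\mu_V$, to convert $\mu_V(D)=0$ into $dV(D(\omega))=0$ for a.e.\ $\omega$, which gives the pathwise singularity of $d\langle \mathds{1}_D\cdot M\rangle(\omega)$. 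Both ingredients are standard and available in this setting. Identifying each space as the orthogonal complement of the other then yields closedness for free, which is a clean way to get the ``sub-Hilbert'' assertion, and the localization argument at the end (stopping preserves both classes, and $\langle M,N\rangle^{\tau_k}=\langle M^{\tau_k},N^{\tau_k}\rangle$) is fine. I see no gap.
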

For any $M\in\mathcal{H}^2_0$, we denote by $M^V$ its projection on $\mathcal{H}^{2,V}$.
\\
\\
We can now finally establish the main result of the present note.

\begin{proposition}\label{BracketMAFnew}
	Let $V$ be a continuous non-decreasing function.
	Let $M,N$ be two square integrable MAFs, and assume that the AF $\langle N\rangle$ is absolutely continuous with respect to $V$.
	There exists a function $v\in\mathcal{B}([0,T]\times E,\mathbbm{R})$ such that for any $(s,x)$, $\langle M^{s,x}, N^{s,x}\rangle =\int_s^{\cdot\vee s}v(r,X_r) V_r$.
\end{proposition}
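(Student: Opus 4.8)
The plan is to deduce the statement from the representation result Proposition \ref{RadonDerivAF}, after first reducing the signed bracket to a difference of two positive non-decreasing AFs and checking that each of them is absolutely continuous with respect to $V$.

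First I would apply Corollary \ref{AFbracket} to obtain the bounded variation AF $\langle M,N\rangle$ whose cadlag version under each $\mathbbm{P}^{s,x}$ is $\langle M^{s,x},N^{s,x}\rangle$. The crucial preliminary step is then to establish, for every fixed $(s,x)$, the absolute continuity $d\langle M^{s,x},N^{s,x}\rangle\ll dV$. By hypothesis $\langle N\rangle$ is absolutely continuous with respect to $V$, so by Definition \ref{DefAF} we have $d\langle N^{s,x}\rangle\ll dV$, i.e. $N^{s,x}\in\mathcal{H}^{2,V}$. The Kunita--Watanabe inequality bounds the total variation measure of the cross bracket by $(d\langle M^{s,x}\rangle)^{1/2}(d\langle N^{s,x}\rangle)^{1/2}$; hence any $dV$-null set is $d\langle N^{s,x}\rangle$-null, and therefore $Var(\langle M^{s,x},N^{s,x}\rangle)$-null. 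This yields $Var(\langle M^{s,x},N^{s,x}\rangle)\ll dV$, and in particular the positive and negative variations of $\langle M^{s,x},N^{s,x}\rangle$ are each $\ll dV$.

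Next I would split the AF via Proposition \ref{VarTotAF}, which produces two increasing AFs $Pos(\langle M,N\rangle)$ and $Neg(\langle M,N\rangle)$ having respective cadlag versions $Pos(\langle M^{s,x},N^{s,x}\rangle)$ and $Neg(\langle M^{s,x},N^{s,x}\rangle)$ under each $\mathbbm{P}^{s,x}$. By the previous step both are positive non-decreasing AFs absolutely continuous with respect to $V$ in the sense of Definition \ref{DefAF}, so Proposition \ref{RadonDerivAF} applies to each and furnishes Borel functions $v_1,v_2\in\mathcal{B}([0,T]\times E,\mathbbm{R})$ with $Pos(\langle M^{s,x},N^{s,x}\rangle)=\int_s^{\cdot\vee s}v_1(r,X_r)dV_r$ and $Neg(\langle M^{s,x},N^{s,x}\rangle)=\int_s^{\cdot\vee s}v_2(r,X_r)dV_r$, in the sense of indistinguishability and valid for all $(s,x)$ simultaneously. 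Setting $v:=v_1-v_2$ and invoking the Jordan decomposition $\langle M^{s,x},N^{s,x}\rangle=Pos(\langle M^{s,x},N^{s,x}\rangle)-Neg(\langle M^{s,x},N^{s,x}\rangle)$ then gives the claimed representation $\langle M^{s,x},N^{s,x}\rangle=\int_s^{\cdot\vee s}v(r,X_r)dV_r$.

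The main obstacle is the absolute continuity step: everything hinges on transferring $d\langle N^{s,x}\rangle\ll dV$ to the cross bracket, for which the Kunita--Watanabe inequality is the natural tool, and on passing from the signed measure to its total variation so that the positive and negative parts inherit absolute continuity. Once this is secured the remainder is a mechanical assembly of Propositions \ref{VarTotAF} and \ref{RadonDerivAF}; the one point requiring care is that Proposition \ref{RadonDerivAF} only speaks of positive non-decreasing AFs, which is precisely why the Jordan decomposition of Proposition \ref{VarTotAF} must be carried out before the representation result can be applied, and why the two resulting densities can be subtracted consistently, both being taken against the same deterministic integrator $V$.
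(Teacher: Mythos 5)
Your proposal is correct, and it assembles the same outer scaffolding as the paper---Corollary \ref{AFbracket}, then the Jordan decomposition via Proposition \ref{VarTotAF}, then Proposition \ref{RadonDerivAF} applied separately to $Pos(\langle M,N\rangle)$ and $Neg(\langle M,N\rangle)$, and finally $v=v_+-v_-$---but the pivotal step, showing that the positive and negative variations of $\langle M^{s,x},N^{s,x}\rangle$ are absolutely continuous with respect to $V$, is handled by a genuinely different argument. You use the Kunita--Watanabe inequality in its measure form, $\bigl|d\langle M^{s,x},N^{s,x}\rangle\bigr|(A)\leq \bigl(d\langle M^{s,x}\rangle(A)\bigr)^{1/2}\bigl(d\langle N^{s,x}\rangle(A)\bigr)^{1/2}$, which transfers $d\langle N^{s,x}\rangle\ll dV$ directly to the total variation of the cross bracket and hence to both $Pos$ and $Neg$. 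The paper instead invokes the orthogonal decomposition $\mathcal{H}^2_0=\mathcal{H}^{2,V}\oplus^{\perp}\mathcal{H}^{2,\perp V}$ of Proposition \ref{DecompoMart}: since $N^{s,x}\in\mathcal{H}^{2,V}$, one has $\langle M^{s,x},N^{s,x}\rangle=\langle (M^{s,x})^V,N^{s,x}\rangle$, and polarization writes this as $\frac{1}{4}\langle (M^{s,x})^V+N^{s,x}\rangle-\frac{1}{4}\langle (M^{s,x})^V-N^{s,x}\rangle$, with both brackets taken of elements of $\mathcal{H}^{2,V}$ and therefore $\ll dV$. Your route is more elementary and self-contained, requiring only a classical inequality rather than the projection machinery imported from \cite{paper1preprint}, and it sidesteps the paper's identification of $Pos(\langle M^{s,x},N^{s,x}\rangle)$ with $\frac{1}{4}\langle (M^{s,x})^V+N^{s,x}\rangle$ (a decomposition into increasing parts need not be the minimal one; your argument only needs domination by the total variation, which is cleaner). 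What the paper's route buys is an explicit description of the two increasing parts as angular brackets of martingales in $\mathcal{H}^{2,V}$, consistent with the framework it has already installed. The only point worth stating explicitly in your write-up is that the Kunita--Watanabe bound holds, off a single $\mathbbm{P}^{s,x}$-null set, simultaneously for all Borel subsets $A$ of $[0,T]$, so that the pathwise absolute continuity of the measures is indeed obtained; this is standard and does not affect the validity of the argument.
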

\begin{proof}
	By Corollary \ref{AFbracket}, there exists a bounded variation AF with $L^1$ values denoted $\langle M,N\rangle$ such that under any $\mathbbm{P}^{s,x}$, the cadlag version of $\langle M,N\rangle$ is $\langle M^{s,x},N^{s,x}\rangle$.
	\\
	By Proposition \ref{VarTotAF}, there exists an increasing AF with $L^1$ values denoted $Pos(\langle M,N\rangle)$ (resp. $Neg(\langle M,N\rangle)$)  such that under any $\mathbbm{P}^{s,x}$, the cadlag version of $Pos(\langle M,N\rangle)$ (resp. $Neg(\langle M,N\rangle)$) is $Pos(\langle M^{s,x},N^{s,x}\rangle)$ (resp. $Neg(\langle M^{s,x},N^{s,x}\rangle)$).
	\\
	We fix some $(s,x)$ and the associated probability $\mathbbm{P}^{s,x}$. Since $\langle N\rangle$ is absolutely continuous with respect to $V$, comparing Definition \ref{DefAF} and Notation \ref{H2V}
 we have   $N^{s,x}\in\mathcal{H}^{2,V}$. Therefore 
by Proposition \ref{DecompoMart} we have
	\begin{equation}
		\begin{array}{rcl}
		\langle M^{s,x},N^{s,x}\rangle &=& \langle (M^{s,x})^V,N^{s,x}\rangle\\
		&=&\frac{1}{4}\langle (M^{s,x})^V+N^{s,x}\rangle - \frac{1}{4}\langle (M^{s,x})^V-N^{s,x}\rangle.
		\end{array}
	\end{equation}
	Since both processes $\frac{1}{4}\langle (M^{s,x})^V+N^{s,x}\rangle$, $\frac{1}{4}\langle (M^{s,x})^V-N^{s,x}\rangle$ are increasing and starting at zero, we have $Pos(\langle M^{s,x},N^{s,x}\rangle)=\frac{1}{4}\langle (M^{s,x})^V+N^{s,x}\rangle$ and $Neg(\langle M^{s,x},N^{s,x}\rangle)=\frac{1}{4}\langle (M^{s,x})^V-N^{s,x}\rangle$. Now since $(M^{s,x})^V+N^{s,x}$ and $(M^{s,x})^V-N^{s,x}$ belong to $\mathcal{H}^{2,V}$, we have shown that $dPos(\langle M^{s,x},N^{s,x}\rangle)\ll dV$ and $dNeg(\langle M^{s,x},N^{s,x}\rangle)\ll dV$ in the sense of stochastic measures.
	\\
	Since this holds for all $(s,x)$  Proposition \ref{RadonDerivAF}
insures
 the existence of two functions $v_+,v_-$ in $\mathcal{B}([0,T]\times E,\mathbbm{R})$ such that for any $(s,x)$, $Pos(\langle M^{s,x}, N^{s,x}\rangle) =\int_s^{\cdot\vee s}v_+(r,X_r)dV_r$ and $Neg(\langle M^{s,x}, N^{s,x}\rangle) =\int_s^{\cdot\vee s}v_-(r,X_r)dV_r$.
	\\
	The conclusion now follows setting $v=v_+-v_-$.
\end{proof}

\bibliographystyle{plain}

\bibliography{biblioPhDBarrasso}

\end{document}